\newtheorem*{rep@theorem}{\rep@title}
\newcommand{\newreptheorem}[2]{%
\newenvironment{rep#1}[1]{%
 \def\rep@title{#2 \ref{##1}}%
 \begin{rep@theorem}}%
 {\end{rep@theorem}}}
\newtheorem*{rep@cor}{\rep@title}
\newcommand{\newrepcor}[2]{%
\newenvironment{rep#1}[1]{%
 \def\rep@title{#2 \ref{##1}}%
 \begin{rep@cor}}%
 {\end{rep@cor}}}
\newtheorem*{rep@prop}{\rep@title}
\newcommand{\newrepprop}[2]{%
\newenvironment{rep#1}[1]{%
 \def\rep@title{#2 \ref{##1}}%
 \begin{rep@prop}}%
 {\end{rep@prop}}}
\newlist{steps}{enumerate}{1}
\setlist[steps, 1]{itemsep=8pt,leftmargin=0cm,itemindent=.5cm,labelwidth=\itemindent,labelsep=0cm,align=left,label = \textbf{\emph{Step \arabic*}:\,}}
\newcommand{\myitem}[1]{%
\item[#1]\protected@edef\@currentlabel{#1}%
}
\newcommand{\II}{I\hspace{-0.1cm}I}
\newcommand{\III}{I\hspace{-0.1cm}I\hspace{-0.1cm}I}
\newtheorem{theorem}{\rm\bf Theorem}[section]
\newtheorem{proposition}[theorem]{\rm\bf Proposition}
\newtheorem{lemma}[theorem]{\rm\bf Lemma}
\newtheorem{corollary}[theorem]{\rm\bf Corollary}
\newtheorem{definition}[theorem]{\rm\bf Definition}
\theoremstyle{remark}
\newtheorem{remark}[theorem]{\rm\bf Remark}
\newcommand{\C}{{\mathbb C}}
\newcommand{\HH}{{\mathbb H}}
\newcommand{\R}{{\mathbb R}}
\newcommand{\Z}{{\mathbb Z}}
\newcommand{\tr}{{\rm Tr}}
\newcommand{\ZZ}{{\mathcal{Z}}}
\newcommand{\CC}{{\mathcal{C}}}
\newcounter{notes}%
\def\interieur#1{\mathord{\mathop{\kern 0pt #1}\limits^\circ}}
\title{Weakly almost-Fuchsian manifolds are nearly-Fuchsian}
\author{Manh-Tien Nguyen}
\address{Manh-Tien Nguyen:
University of Luxembourg, FSTM, Department of Mathematics, 
Maison du nombre, 6 avenue de la Fonte,
L-4364 Esch-sur-Alzette, Luxembourg}
\email{tien.nguyen@uni.lu}
\author{Jean-Marc Schlenker}
\address{Jean-Marc Schlenker:
University of Luxembourg, FSTM, Department of Mathematics, 
Maison du nombre, 6 avenue de la Fonte,
L-4364 Esch-sur-Alzette, Luxembourg}
\email{jean-marc.schlenker@uni.lu}
\author{Andrea Seppi}
\address{Andrea Seppi: Universit\`a di Torino, Dipartimento di Matematica ``Giuseppe Peano'',  Via Carlo Alberto 10, 10123 Torino, Italy}
\email{andrea.seppi@unito.it}
\date{}
\begin{document}

\begin{abstract}
  We show that a hyperbolic three-manifold $M$ containing a closed minimal surface with principal curvatures in $[-1,1]$ also contains nearby (non-minimal) surfaces with principal curvatures in $(-1,1)$. When $M$ is complete and homeomorphic to $S\times\mathbb{R}$, for $S$ a closed surface, this implies that $M$ is quasi-Fuchsian, answering a question left open from Uhlenbeck's 1983 seminal paper. Additionally, our result implies that there exist (many) quasi-Fuchsian manifolds that contain a closed surface with principal curvatures in $(-1,1)$, but no  closed minimal surface with principal curvatures in $(-1,1)$, disproving a conjecture from the 2000s.
\end{abstract}

\maketitle

\tableofcontents

\section{Introduction}

Embedded surfaces, and in particular minimal surfaces, play an important role in the study of hyperbolic three-manifolds. During the 1980s, the visionary works of Anderson \cite{anderson,anderson2} and Uhlenbeck \cite{uhlenbeck}, together with the groundbreaking existence results \cite{schoenyau,sacksuhlenbeck}, provided a fundamental input on minimal surfaces and quasi-Fuchsian manifolds. Moreover, the work of Uhlenbeck, together with the study of the hyperbolic Gauss map of (not necessarily minimal) surfaces initiated by Epstein \cite{epstein2,epstein}, highlighted the central role of surfaces satisfying the condition that the principal curvatures are in $(-1,1)$ --- that is, ``horospherically convex on both sides''. Since then, many advances have been obtained, particularly  from the 2000s and until very recently, see \cite{taubes,hass,sanders,huangwang,huangwang2,huangluciatarantello,huanglowe,labourie,CMN} and many others.

\subsection{Main result}

In this paper, we investigate the condition of having principal curvatures in $(-1,1)$ or in $[-1,1]$, for minimal and non-minimal surfaces. Our main result shows that, if a hyperbolic three-manifold contains a closed minimal surface with principal curvatures  in $[-1,1]$, then one can find nearby (non-minimal) surfaces with principal curvatures  in $(-1,1)$.

\begin{theorem}\label{thm:main}
  Let $\Sigma$ be a closed, orientable, two-sided, embedded minimal surface in a hyperbolic three manifold $M$, such that the principal curvatures of $\Sigma$ are in $[-1,1]$. Then any neighbourhood of $\Sigma$ in $M$ contains a closed embedded surface with principal curvatures in $(-1,1)$.
\end{theorem}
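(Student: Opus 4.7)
My plan is to construct a suitable small normal perturbation of $\Sigma$. For a smooth function $f\colon\Sigma\to\R$ of small $C^2$-norm, let $\Sigma_f := \{\exp_p(f(p) N(p)) : p\in \Sigma\}$ be the normal graph of $f$ over $\Sigma$, which is a closed embedded surface close to $\Sigma$. Using minimality of $\Sigma$ and the ambient constant sectional curvature $-1$, I would compute the first-order variation of the shape operator at $f\equiv 0$ to be
$$\dot{B}(f) \;=\; f(I - B^2) \;-\; \Hess f,$$
where $B$ is the shape operator of $\Sigma$ and $B^2 = \lambda^2 I$ since $\Sigma$ is minimal with principal curvatures $\pm\lambda$. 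Setting $Z := \{p \in \Sigma : \lambda(p) = 1\}$, off $Z$ we have $1 - \lambda^2 > 0$ and small normal perturbations preserve $|\lambda| < 1$; the entire difficulty is therefore localized at $Z$.

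I would first argue that $Z \neq \Sigma$: if $\lambda\equiv 1$, the Codazzi equation in constant curvature, applied in the principal frame $(e_1, e_2)$, forces $\nabla_{e_i} e_j = 0$ for all $i, j$; then $[e_1, e_2] = 0$ and $R^\Sigma(e_1, e_2, e_2, e_1) = 0$, contradicting the Gauss equation $K_\Sigma = -1 + \det B = -2$. Since $|A|^2 = 2\lambda^2$ is real-analytic on the (real-analytic) surface $\Sigma$, the set $Z$ is a proper real-analytic subvariety of $\Sigma$ of dimension at most one.

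I would then take $f = \epsilon + \delta h$ with $\epsilon, \delta > 0$ small constants, where $h\colon\Sigma\to\R$ is a fixed smooth function satisfying $h|_Z = 0$ and
$$(\Hess h)(e_1, e_1) > 0, \quad (\Hess h)(e_2, e_2) < 0 \quad \text{at every } p \in Z,$$
where $(e_1, e_2)$ are the unit principal directions of $B$ (smooth orthogonal line fields near $Z$, because $\pm 1$ are distinct eigenvalues of $B$ on $Z$). I would build such an $h$ by combining local ``saddle'' models $h_p(x_1, x_2) = x_1^2 - x_2^2$ in geodesic normal coordinates adapted to the principal frame near each component of $Z$, glued by a partition of unity (or equivalently via a Whitney-type extension prescribing the 2-jet of $h$ along $Z$). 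With this choice, at every $p \in Z$ one has $\dot{B}(f)(p) = -\delta\,\Hess h(p)$, whose eigenvalues in the principal frame push $(1, -1)$ strictly inside $(-1, 1)$; by continuity this persists on a neighborhood of $Z$. Outside this neighborhood, $1 - \lambda^2 \ge c_0 > 0$, so the term $\epsilon(1-\lambda^2)I$ dominates the $\delta$-contribution and the principal curvatures remain strictly in $(-1, 1)$ once $\epsilon, \delta$ are small enough.

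The main obstacle I expect is the global construction of $h$: along a one-dimensional analytic component of $Z$ the principal frame $(e_1, e_2)$ rotates with respect to any local coordinates on the component, and at possible singular points of $Z$ (where analytic branches meet) compatibility of the local saddle models must be carefully arranged so that the prescribed Hessian signature holds at every point of $Z$. Once such $h$ is in hand, the conclusion follows from a direct pointwise check that, for suitably small $\epsilon, \delta > 0$, the closed embedded surface $\Sigma_{\epsilon + \delta h}$ is contained in any prescribed neighborhood of $\Sigma$ and has principal curvatures strictly in $(-1, 1)$.
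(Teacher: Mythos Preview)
Your overall strategy matches the paper's: perturb $\Sigma$ normally and construct $f$ whose first variation of the shape operator has the right signature on $Z$. You also correctly isolate the construction of such an $f$ as the crux. But the proposed construction via local saddles glued by a partition of unity, or by Whitney extension of a prescribed $2$-jet, runs into a genuine global obstruction that you do not address. Work in the flat coordinate $z=x+iy$ of the holomorphic quadratic differential $\mathfrak q$ whose real part is $\II$; there the principal frame is $(\partial_x,\partial_y)$ everywhere and, on $Z$, the intrinsic Hessian equals the Euclidean one. Your requirement becomes: $h_{xx}$ and $h_{yy}$ of fixed opposite signs along each closed-curve component $Z_i$. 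If $Z_i$ happened to be a Euclidean geodesic---a straight line at angle $\theta$ closing up under a translation holonomy---then $h_x,h_y$ are periodic along $Z_i$, and integrating $\tfrac{d}{dt}h_x(\zeta(t))$ and $\tfrac{d}{dt}h_y(\zeta(t))$ over a period and eliminating $\int h_{xy}$ yields
\[
\cos^2\theta\int_{Z_i}h_{xx}\,dt\;=\;\sin^2\theta\int_{Z_i}h_{yy}\,dt,
\]
forcing the two sides to have opposite strict signs (or $\int h_{xx}=0$, resp.\ $\int h_{yy}=0$, when $\theta\in\{0,\pi/2\}$): contradiction. So in that scenario \emph{no} function with the desired Hessian exists on any neighbourhood of $Z_i$, and no amount of gluing or jet-extension can manufacture one.

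The paper's proof supplies exactly this missing ingredient. Proposition~\ref{prop:no geodesic} rules out geodesic components of $Z$ by a global argument: if some arc of $Z$ were straight, Cauchy--Kovalevskaya and analytic continuation would force $\widetilde\Sigma$ to coincide with a translation-invariant minimal surface in $\HH^3$ (Lemma~\ref{lemma:ODE}), which has no umbilic points---contradicting that $\mathfrak q$ must vanish on a closed surface of genus $\ge 2$. With this in hand, the paper constructs $f$ explicitly (Lemmas~\ref{lm:point}--\ref{lm:curve}, Proposition~\ref{prop:technical statement}): on a short arc where $Z_i$ is \emph{not} a straight line one modifies the model $(-x^2+y^2)/2$ so as to absorb the affine defect produced by the translation holonomy, and it is precisely the non-linearity of the arc that makes the linear system \eqref{eq:integrate} solvable. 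Incidentally, your two stated worries are disposed of along the way: the principal frame does not rotate in flat coordinates, and Proposition~\ref{prop:points and curves} shows $Z$ has no branch points (each point of $Z$ is a minimum of the conformal factor $u$, hence $Z\subset\{u_y=0\}$, a single smooth curve), so $Z$ is a finite disjoint union of points and smooth simple closed curves.
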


Observe that Theorem \ref{thm:main} does not make use of the global structure of $M$. That is, there is no assumption on the topology or on the completeness of $M$. 
When restricting to \emph{complete} hyperbolic three-manifolds $M$ \emph{homeomorphic to} $S\times\R$, where $S$ is a closed orientable surface of genus at least two, the following terminology is in place (from \cite{minsurf,huangwang,huanglowe}):
\begin{itemize}
\item $M$ is called \emph{nearly-Fuchsian} if it contains a closed surface homotopic to $S\times\{\star\}$ with principal curvatures in $(-1,1)$;
\item $M$ is called  \emph{almost-Fuchsian} if it contains a closed minimal surface homotopic to $S\times\{\star\}$ with principal curvatures in $(-1,1)$;
\item $M$ is called \emph{weakly almost-Fuchsian} if it contains a closed minimal surface homotopic to $S\times\{\star\}$ with principal curvatures in $[-1,1]$.
\end{itemize}
In this setting, an immediate consequence of Theorem \ref{thm:main} is:

\begin{corollary}\label{cor:weakly nearly}
  Every weakly almost-Fuchsian manifold is nearly-Fuchsian. 
\end{corollary}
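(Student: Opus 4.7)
The corollary is an essentially immediate consequence of Theorem \ref{thm:main}, and the plan is simply to apply it. Starting from a weakly almost-Fuchsian manifold $M \cong S \times \R$, the definition supplies a closed minimal surface $\Sigma \subset M$ homotopic to $S \times \{\star\}$ with principal curvatures in $[-1,1]$. Since $S$ is a closed orientable surface of genus at least two, $\Sigma$ is orientable, and since $M$ is orientable and $\Sigma$ is embedded and homotopic to a level surface, $\Sigma$ is two-sided. All hypotheses of Theorem \ref{thm:main} are therefore satisfied.

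The plan is to apply Theorem \ref{thm:main} within a small tubular neighbourhood $U$ of $\Sigma$, chosen diffeomorphic to $\Sigma \times (-\epsilon,\epsilon)$. The theorem produces a closed embedded surface $\Sigma' \subset U$ with principal curvatures in $(-1,1)$. To conclude that $M$ is nearly-Fuchsian it only remains to verify that $\Sigma'$ is homotopic to $S \times \{\star\}$.

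The only (minor) obstacle is thus to control the isotopy class of the surface $\Sigma'$ produced by Theorem \ref{thm:main}. I expect the construction in the proof of Theorem \ref{thm:main} to yield $\Sigma'$ as a normal graph over $\Sigma$, that is, as the image of a small $C^2$-section of the normal bundle of $\Sigma$. In that case $\Sigma'$ is automatically isotopic to $\Sigma$ through a one-parameter family of embedded surfaces in $U$, hence homotopic to $\Sigma$, hence homotopic to $S \times \{\star\}$. Should the statement of Theorem \ref{thm:main} be extracted in a way that does not record this directly, the same conclusion follows from a purely topological argument: for $\epsilon$ small enough, the projection $U \cong \Sigma \times (-\epsilon,\epsilon) \to \Sigma$ restricts to a local diffeomorphism on any closed embedded surface in $U$ sufficiently close to $\Sigma$, hence to a covering map; counting Euler characteristics forces this cover to be trivial, and $\Sigma'$ is isotopic to $\Sigma$ inside $U$. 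Either way, $\Sigma'$ witnesses that $M$ is nearly-Fuchsian.
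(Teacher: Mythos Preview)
Your proposal is correct and matches the paper's approach: the paper treats this corollary as an immediate consequence of Theorem~\ref{thm:main} and gives no separate proof. Your extra care about the homotopy class of $\Sigma'$ is well-placed, and your first explanation is exactly what happens in the proof of Theorem~\ref{thm:main}: the surface produced there is $\Sigma_{t\!f}=\iota_{t\!f}(\Sigma)$, a normal graph over $\Sigma$, hence isotopic to $\Sigma$ and thus homotopic to $S\times\{\star\}$.
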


Let us now discuss the implications of these results.

\subsection{A question of Uhlenbeck}

An open question since the work of Uhlenbeck (see \cite[Theorem 3.3]{uhlenbeck}) is whether a weakly almost-Fuchsian manifold $M$ is necessarily quasi-Fuchsian. This is well-known when $M$ is almost-Fuchsian, but remained open when the principal curvatures achieve the maximum value 1.  Since a nearly-Fuchsian manifold is quasi-Fuchsian (see \cite[Appendix]{uhlenbeck}, or \cite[Proposition 4.18]{elemam-seppi} in arbitrary dimensions), we prove that the answer is affirmative.

\begin{corollary}\label{cor:uhlenbeck}
  Every weakly almost-Fuchsian manifold is quasi-Fuchsian.
\end{corollary}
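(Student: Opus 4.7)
The plan is to deduce Corollary \ref{cor:uhlenbeck} as an immediate consequence of Corollary \ref{cor:weakly nearly} combined with the classical fact, recalled just above the statement, that every nearly-Fuchsian manifold is quasi-Fuchsian (proved in the appendix of \cite{uhlenbeck} and in arbitrary dimension in \cite[Proposition 4.18]{elemam-seppi}). Since this latter implication is already on record, essentially no work remains.

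Concretely, I would start from a weakly almost-Fuchsian manifold $M$, so $M$ is a complete hyperbolic three-manifold homeomorphic to $S\times\R$ containing a closed embedded minimal surface $\Sigma$, homotopic to $S\times\{\star\}$, with principal curvatures in $[-1,1]$. Corollary \ref{cor:weakly nearly} (itself a direct application of Theorem \ref{thm:main}) produces, inside any prescribed neighbourhood $U$ of $\Sigma$, a closed embedded surface $\Sigma'\subset U$ with principal curvatures in $(-1,1)$. To conclude that $M$ is nearly-Fuchsian, I need $\Sigma'$ to be homotopic to $S\times\{\star\}$; for this I would choose $U$ to be a tubular neighbourhood of $\Sigma$, so that the inclusion $\Sigma\hookrightarrow U$ is a homotopy equivalence. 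Then $\Sigma'\subset U$ is isotopic to $\Sigma$ inside $M$, hence homotopic to $S\times\{\star\}$. Thus $M$ is nearly-Fuchsian, and invoking the nearly-Fuchsian implies quasi-Fuchsian result concludes the proof.

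The main (and only) obstacle is a cosmetic one: ensuring that the surface produced by Theorem \ref{thm:main} lies in the correct homotopy class. This is handled by shrinking the neighbourhood $U$ as above, so there is no real difficulty. All the substantive content of Corollary \ref{cor:uhlenbeck} is absorbed into Theorem \ref{thm:main}; the corollary itself is a short bookkeeping argument.
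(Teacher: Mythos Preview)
Your proposal is correct and follows exactly the paper's approach: Corollary~\ref{cor:uhlenbeck} is obtained by chaining Corollary~\ref{cor:weakly nearly} with the known fact that nearly-Fuchsian implies quasi-Fuchsian. Your added remark about choosing a tubular neighbourhood to ensure the correct homotopy class is a harmless clarification that the paper leaves implicit in the passage from Theorem~\ref{thm:main} to Corollary~\ref{cor:weakly nearly}.
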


Let us present a few previous results around Corollaries \ref{cor:weakly nearly} and \ref{cor:uhlenbeck}. Recently, \cite[Corollary 1.4]{huanglowe} proved that if a weakly almost-Fuchsian manifold does not contain any accidental parabolic, then it is quasi-Fuchsian. Compared to their result, we provide an independent, unified proof, removing the assumption on the accidental parabolics. In fact, as a consequence of Corollary \ref{cor:uhlenbeck}, every weakly almost-Fuchsian manifold does not contain any accidental parabolic.

Forgetting for a moment the minimality condition on $\Sigma$, in \cite[Section 4]{rubinstein} Rubinstein constructed examples of closed embedded surfaces with principal curvatures in $[-1,1]$ in a hyperbolic three-manifold that has accidental parabolics. That is, a ``weakly nearly-Fuchsian'' manifold might not be quasi-Fuchsian. In particular, the conclusion of Theorem \ref{thm:main} fails if $\Sigma$ is only a closed embedded, non-minimal, surface. The recent work of Davalo (see \cite[Theorem 1.1, Theorem 1.2]{davalo}) constructed quasi-Fuchsian manifolds that are not nearly-Fuchsian.

Finally, we also remark that the class of nearly-Fuchsian manifolds is easily seen to be open inside the deformation space of complete hyperbolic structures on $S\times\R$, because a perturbation of the ambient hyperbolic structure will maintain the condition that the principal curvatures are in $(-1,1)$ for small times. Therefore, the conclusion of Corollaries \ref{cor:weakly nearly} and  \ref{cor:uhlenbeck} (and Corollary \ref{cor:big conjecture} below)  still holds true for $M$ in a neighbourhood of the space of weakly almost-Fuchsian manifolds.

\subsection{Nearly-Fuchsian that are not almost-Fuchsian}

It has been conjectured since the early 2000s (see the historical discussion below) that every nearly-Fuchsian manifold is almost-Fuchsian. Equivalently, the conjecture was that if a quasi-Fuchsian manifold $M$ contains a closed surface with principal curvatures in $(-1,1)$, then it contains a closed \emph{minimal} surface with principal curvatures in $(-1,1)$.

As a consequence of our main result, we disprove this conjecture. It is known from \cite[Theorem 4.4, Corollary 4.5]{uhlenbeck} that, for every complex structure $X$ on $S$ and every holomorphic quadratic differential $\mathfrak q$ on $(S,X)$, there is a unique weakly almost-Fuchsian manifold, but not almost-Fuchsian, such that the (unique) closed minimal surface in $M$ has first fundamental form conformal to $X$, and second fundamental form equal to the real part of $t\mathfrak q$, for a (unique) value of $t>0$. So, there is abundance of non-almost-Fuchsian, weakly almost-Fuchsian manifolds. 
\begin{corollary}\label{cor:big conjecture}
Let $M$ be a weakly almost-Fuchsian manifold that is not almost-Fuchsian. Then $M$ contains a closed surface with principal curvatures in $(-1,1)$, but no  closed minimal surface with principal curvatures in $(-1,1)$.
\end{corollary}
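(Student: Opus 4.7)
The plan is to treat the two assertions of the corollary separately, starting with the easy one. The existence of a closed surface in $M$ with principal curvatures in $(-1,1)$ is immediate from Corollary \ref{cor:weakly nearly}: since $M$ is weakly almost-Fuchsian, it is nearly-Fuchsian, which by definition means exactly that such a surface (homotopic to $S\times\{\star\}$) exists.

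For the non-existence of a closed minimal surface $\Sigma'$ in $M$ with principal curvatures in $(-1,1)$, I would argue by contradiction, aiming to show that any such $\Sigma'$ must be homotopic to $S\times\{\star\}$. Once this is established, the very definition of almost-Fuchsian directly contradicts the hypothesis on $M$. The topological input needed is the $\pi_1$-injectivity of any closed surface in a hyperbolic three-manifold whose principal curvatures lie in $(-1,1)$; this is standard, via the horospherical convexity on both sides ensuring that each connected lift to $\HH^3$ is a properly embedded disk (indeed a graph over a totally geodesic plane, essentially by the argument behind \cite[Proposition 4.18]{elemam-seppi}). Granted incompressibility, and noting that $M \cong S\times\R$ (recall $M$ is quasi-Fuchsian by Corollary \ref{cor:uhlenbeck}, hence a product), Waldhausen's classification of incompressible surfaces in a product forces $\Sigma'$ to be isotopic, and in particular homotopic, to $S\times\{\star\}$, as required.

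An essentially equivalent finish is available through uniqueness: the parametrization of weakly almost-Fuchsian manifolds recalled just before the corollary from \cite[Theorem 4.4, Corollary 4.5]{uhlenbeck} yields uniqueness of the closed minimal surface homotopic to $S\times\{\star\}$ with principal curvatures in $[-1,1]$ inside $M$. Thus $\Sigma'$ would have to coincide with the minimal surface $\Sigma$ witnessing the weak almost-Fuchsianness of $M$, whose principal curvatures attain $\pm 1$ by the assumption that $M$ is not almost-Fuchsian---a contradiction. The only mildly delicate step in either route is the verification of $\pi_1$-injectivity for a possibly immersed $\Sigma'$; if the corollary is read as referring to embedded surfaces, which is the setting of Theorem \ref{thm:main}, the graph-in-$\HH^3$ argument applies directly, and the immersed case can be handled by running the same argument on the universal cover of the immersion.
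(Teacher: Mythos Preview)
Your argument is correct, and for the first assertion it coincides with the paper's (both just invoke Corollary~\ref{cor:weakly nearly}). For the second assertion you take a genuinely different route. The paper's proof is a one-liner: it cites the well-known uniqueness of the closed minimal surface in a weakly almost-Fuchsian manifold (from Uhlenbeck, with a more general version in \cite[Theorem 1.3]{huang-lowe-seppi}) --- uniqueness among \emph{all} closed minimal surfaces, not just those in a fixed homotopy class or with curvature bounds. Since that unique minimal surface attains principal curvature $\pm 1$ by hypothesis, no closed minimal surface with principal curvatures in $(-1,1)$ can exist.

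By contrast, you avoid invoking the strong uniqueness and instead argue geometrically that $\Sigma'$ must be incompressible (via the proper-disk lift to $\HH^3$), then use Waldhausen to place it in the correct homotopy class, at which point the definition of almost-Fuchsian gives the contradiction directly. This is a valid and more self-contained path; it trades a black-box citation for an explicit topological argument. Your ``second route'' via uniqueness is not really independent of the first, since the version of uniqueness you state (restricted to the homotopy class of $S\times\{\star\}$) still requires you to first put $\Sigma'$ in that class --- so it is your first route that does the work. The paper's shortcut is precisely that the stronger uniqueness statement bypasses the incompressibility/Waldhausen step entirely.
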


Indeed, from \cite{uhlenbeck} again, it is well-known that a closed minimal surface in a weakly almost-Fuchsian manifold is unique (see also \cite[Theorem 1.3]{huang-lowe-seppi} for a more general statement), and therefore a weakly almost-Fuchsian manifold cannot contain any other closed minimal surfaces with principal curvatures in $(-1,1)$.

The history of this problem is the following. In \cite{andrewsICM}, Andrews discussed applications of  geometric flows that evolve by suitably chosen symmetric functions of the principal curvatures, mostly for the study of minimal surfaces in $\mathbb S^3$, and proposed a similar approach for $\HH^3$. Citing \cite[Section 6]{andrewsICM}, ``The surfaces of interest are those for which all of the principal curvatures are less than 1 in magnitude. We can find a flow which deforms any such surface in a compact hyperbolic manifold to a minimal surface, while keeping the principal curvatures less than 1 in magnitude. Rather surprisingly, this flow is in a way the hyperbolic analogue of the one we just described for the sphere: Instead of moving with speed equal to the sum of the arctangents of the principal curvatures, we move with speed equal to the sum of the hyperbolic arctangents of the principal curvatures. [...]''. At first, the conjecture was believed to be true, see for example \cite{rubinstein,minsurf}, but it remained open since now. Our work provides a solution (in the negative) to the conjecture. 

\subsection{A partial converse}

Having established that the classes of almost-Fuchsian and nearly-Fuchsian manifolds do not agree, the question of comparing these two classes naturally arises. The space of nearly-Fuchsian manifolds is trivially contained in the space of almost-Fuchsian manifolds. We remark that a partial converse statement, namely that ``$\varepsilon$-nearly-Fuchsian manifolds'' are almost-Fuchsian, follows from analysing the quasiconformal dilatation of the hyperbolic Gauss maps (\cite{epstein2}) and from the estimates on the principal curvatures of the minimal surface in terms of the quasiconformal constant of the limit quasicircle.

\begin{theorem}\label{thm:converse}
There exists a universal constant $\varepsilon>0$ such that the following holds. If $M$ is a quasi-Fuchsian manifold containing a closed embedded surface homotopic to $S\times\{\star\}$ with principal curvatures in $(-\varepsilon,\varepsilon)$, then $M$ is almost-Fuchsian.
\end{theorem}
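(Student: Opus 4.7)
The plan is to transfer the smallness of the principal curvatures of $\Sigma$ into a smallness of the quasiconformal distortion of the limit set of the quasi-Fuchsian holonomy, and then invoke a known compactness estimate for minimal surfaces bounded by quasicircles at infinity. I would proceed in four steps.

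\textbf{Step 1 (Gauss maps).} Lift $\Sigma$ to a $\rho$-equivariant properly embedded surface $\widetilde\Sigma\subset\HH^3$, where $\rho\colon\pi_1(S)\to\mathrm{PSL}(2,\C)$ is the quasi-Fuchsian holonomy. Since the principal curvatures lie in $(-\varepsilon,\varepsilon)\subset(-1,1)$, the two hyperbolic Gauss maps $G^{\pm}\colon\widetilde\Sigma\to S^2_{\infty}$, sending $p$ to the endpoint at infinity of the oriented normal geodesic, are well-defined $\rho$-equivariant diffeomorphisms onto the two domains of discontinuity $\Omega^{\pm}$ of $\rho$. This follows from Epstein's theory \cite{epstein2,epstein} together with the discussion recalled in the introduction.

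\textbf{Step 2 (Quasiconformal control).} Using Epstein's explicit formula for the pullback by $G^{\pm}$ of the round conformal structure on $S^2_{\infty}$ in terms of the first and second fundamental forms of $\widetilde\Sigma$, one reads off that $G^{\pm}$ is $K(\varepsilon)$-quasiconformal with $K(\varepsilon)=\frac{1+\varepsilon}{1-\varepsilon}$, and hence the equivariant comparison map $G^{+}\circ (G^{-})^{-1}\colon\Omega^{-}\to\Omega^{+}$ is $K(\varepsilon)^2$-quasiconformal. Promoting this equivariant quasiconformal map between the two domains of discontinuity to a quasiconformal homeomorphism of $S^2_{\infty}$ (for instance by a Beurling--Ahlfors or Douady--Earle type extension across the limit set, or by a direct deformation-theoretic argument), we conclude that the limit set $\Lambda(\rho)$ is a $K'(\varepsilon)$-quasicircle with $K'(\varepsilon)\to 1$ as $\varepsilon\to 0$.

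\textbf{Step 3 (Existence and curvature bound for a minimal surface).} Since $M$ is quasi-Fuchsian, it contains at least one closed, incompressible, embedded minimal surface $\Sigma_{0}$ homotopic to $S\times\{\star\}$ (by Schoen--Yau/Sacks--Uhlenbeck, or by direct area minimization in the homotopy class of $\Sigma$). Lifting to $\HH^{3}$, we obtain an equivariant minimal disc $\widetilde\Sigma_{0}$ whose asymptotic boundary is the limit quasicircle $\Lambda(\rho)$. By the standard compactness-type estimates for area-minimizing discs in $\HH^3$ bounded by a quasicircle at infinity (see e.g.\ the analysis of the principal curvatures of minimal discs in terms of the dilatation of the boundary quasicircle), the supremum of $|k_{i}|$ on $\widetilde\Sigma_{0}$ is controlled by a function $\eta(K'(\varepsilon))$ which tends to $0$ as $K'(\varepsilon)\to 1$.

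\textbf{Step 4 (Conclusion).} Choose $\varepsilon>0$ small enough that $\eta(K'(\varepsilon))<1$. Then $\Sigma_{0}$ is a closed minimal surface in $M$ with principal curvatures in $(-1,1)$, so $M$ is almost-Fuchsian.

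The main obstacle, and the only nontrivial analytic input, is the quantitative estimate in Step~3 relating the extrinsic curvature of the minimal disc to the quasiconformal dilatation of the asymptotic quasicircle. The qualitative statement (curvatures tend to $0$ as the dilatation tends to $1$) is a compactness argument: any sequence of pointed equivariant minimal discs with dilatation going to $1$ subconverges smoothly to a minimal disc bounded by a round circle, which must be totally geodesic. One then upgrades this to the uniform bound $\eta$ needed above, exploiting cocompactness of the $\rho$-action on $\widetilde\Sigma_{0}$.
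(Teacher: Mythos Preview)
Your proposal is correct and follows essentially the same route as the paper: control the quasiconformal constant of the limit quasicircle via Epstein's Gauss-map analysis, then invoke the Seppi-type estimate bounding the principal curvatures of the minimal disc in terms of that constant. The only cosmetic difference is in Step~2: the paper cites Epstein directly to get that $\partial_\infty\widetilde\Sigma$ is a $K$-quasicircle with $K=(1+\varepsilon)/(1-\varepsilon)$, avoiding your detour through $G^{+}\circ(G^{-})^{-1}$ and the extension across $\Lambda(\rho)$; and in Step~3 the paper simply quotes \cite{seppi} (and \cite{huang-lowe-seppi}) for the existence of a universal $K_0>1$ guaranteeing principal curvatures in $(-1,1)$, rather than sketching the compactness argument as you do.
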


We stress here that $\varepsilon$ is completely independent on $M$ (it does not even depend on the genus of $S$).
In terms of geometric flows, it would be interesting to see if, under the assumption of existence of a closed surface with principal curvatures in $(-\varepsilon,\varepsilon)$, Andrews' flow by the sum of hyperbolic inverse tangents of the principal curvatures (or any other geometric flow) converges to the (unique) almost-Fuchsian minimal surface.

\subsection{Acknowledgements}
The authors are very grateful to Filippo Mazzoli for many discussions related to this work. The third author is grateful to Francesco Bonsante, Zeno Huang and Peter Smillie for interesting discussions.

A.S. was funded by the European Union (ERC, GENERATE, 101124349). Views and opinions expressed are however those of the author(s) only and do not necessarily reflect those of the European Union or the European Research Council Executive Agency. Neither the European Union nor the granting authority can be held responsible for them. A.S. is member of the national research group GNSAGA.

\section{Minimal surfaces and normal variations}

\subsection{First definitions}

Throughout the paper, $(M,h)$ will denote a hyperbolic three-manifold, and $\Sigma$ will denote an embedded surface in $M$. All manifolds and submanifolds will be orientable. Let $\nu$ be a unit  vector field normal to $\Sigma$, that can be defined globally by the orientability assumption. We start by recalling basic definitions and facts in the differential geometry of surfaces.
\begin{itemize}
\item The \emph{first fundamental form} of $\Sigma$ is the Riemannian metric $I:=\iota^*h$, where $\iota$ is the inclusion. 
\item The \emph{second fundamental form} of $\Sigma$ is the symmetric $(0,2)$-tensor $\II$ on $\Sigma$ defined by the identity:
$$\nabla^{M}_XY=\nabla^\Sigma_XY+\II(X,Y)\nu$$
where $X,Y$ are smooth vector fields tangent to $\Sigma$, and $\nabla^{M}$ and $\nabla^{\Sigma}$ are the Levi-Civita connections of $(M,h)$ and $(\Sigma,I)$ respectively. 
\item The \emph{shape operator} of $\Sigma$ is the $(1,1)$-tensor $B$ on $\Sigma$ defined by $B(X)=-\nabla^M\nu$. It is self-adjoint with respect to $I$ and satisfies the Weingarten identity
$$\II(X,Y)=I(B(X),Y)~.$$
The \emph{principal curvatures} are the eigenvalues of $B$.
\item The \emph{third fundamental form} of $\Sigma$ is the symmetric $(0,2)$-tensor $\III$ on $\Sigma$ defined by $\III(X,Y)=I(B(X),B(Y))$.
\end{itemize}

The first fundamental form and the shape operator satisfy the Gauss-Codazzi equations, namely $K_I=-1-\det B$ (Gauss' equation) and $(\nabla^\Sigma_XB)(Y)=(\nabla^\Sigma_YB)(X)$ (Codazzi equation). Conversely, every pair $(I,B)$ on a closed surface $\Sigma$ satisfying the Gauss-Codazzi equations can be realised as the first fundamental form and shape operator of an immersion of $\widetilde \Sigma$ into $\HH^3$, which is unique up to post-composition with an isometry of $\HH^3$ and equivariant with respect to a representation of $\pi_1(\Sigma)$ in the group of isometries of $\HH^3$.

An embedded surface $\Sigma$ in $M$ is \emph{minimal} if the trace of its shape operator vanishes identically. Equivalently, the principal curvatures of $\Sigma$ are opposite at every point of $\Sigma$. We will denote the principal curvatures of a minimal surface by $\lambda^+\geq 0$ and $\lambda^-\leq 0$, so that $\lambda^+=-\lambda^-$. 

Recall also that the norm of the second fundamental form of $\Sigma$ is defined as 
\begin{equation}\label{defi normII}
\|\II\|=\sqrt{\tr(B^2)}~.
\end{equation}
 Hence, if $\Sigma$ is minimal, then $\|\II\|^2=(\lambda^+)^2+(\lambda^-)^2=2(\lambda^+)^2$. In particular the principal curvatures, at a given point $p$, are $\lambda^+(p)=1$ and $\lambda^-(p)=-1$ (resp. $\lambda^+(p)<1$ and $\lambda^-(p)>-1$) if and only if $\|\II\|^2(p)=2$ (resp. $\|\II\|^2(p)<2$).

\subsection{Normal variations}

The approach to the proof of Theorem \ref{thm:main} is the following. Assuming that $\Sigma$ is a closed minimal surface in $M$ with principal curvatures in $[-1,1]$, we aim to construct a smooth perturbation of $\Sigma$ that decreases the positive principal curvature of $\Sigma$, and increases the negative principal curvature, for small times, so that the principal curvatures of ``nearby'' surfaces are in $(-1,1)$. For this purpose, we introduce and study \emph{normal} variations with respect to a given function $f$.

Let $\Sigma$ be an embedded   surface in  $M$. Let $f\in C^\infty(\Sigma)$. Then we define
$\iota_f:\Sigma\to M$ as 
$$\iota_f(p)=\exp_p(f\nu(p))~.$$

\begin{lemma}\label{lemma:embedded}
Let $\Sigma$ be a closed embedded surface in a hyperbolic three-manifold $M$ and let $f\in C^\infty(\Sigma)$. There exists $\epsilon>0$ such that $\iota_{t\!f}$ is an embedding for $t\in(-\epsilon,\epsilon)$.
\end{lemma}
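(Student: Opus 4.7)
The plan is to reduce the statement to the tubular neighbourhood theorem. Since $\Sigma$ is closed (hence compact) and embedded in $M$, and $\nu$ is globally defined by orientability, the normal exponential map
\[
E\colon \Sigma\times(-r,r)\to M,\qquad E(p,s)=\exp_p(s\nu(p)),
\]
is a diffeomorphism onto an open tubular neighbourhood of $\Sigma$ for some $r>0$. The key observation is that $\iota_{tf}$ factors through $E$: indeed,
\[
\iota_{tf}(p)=\exp_p\bigl(tf(p)\nu(p)\bigr)=E\bigl(p,tf(p)\bigr),
\]
so $\iota_{tf}=E\circ\sigma_t$ where $\sigma_t\colon\Sigma\to\Sigma\times\R$ is defined by $\sigma_t(p)=(p,tf(p))$.

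First I would use compactness of $\Sigma$ to set $C:=\max_{\Sigma}|f|+1$ and choose $\epsilon:=r/C$. This guarantees that for every $t\in(-\epsilon,\epsilon)$ and every $p\in\Sigma$ one has $tf(p)\in(-r,r)$, so the image of $\sigma_t$ lies in the domain of $E$.

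Next, I would observe that $\sigma_t$ is itself an embedding: it is an injective smooth immersion (being the graph of the smooth function $tf$, with $\mathrm{d}\sigma_t$ having rank $\dim\Sigma$ at every point), and it is proper since $\Sigma$ is compact, hence a topological embedding. Composing with the diffeomorphism $E$ restricted to $\Sigma\times(-r,r)$ preserves being an embedding, so $\iota_{tf}=E\circ\sigma_t$ is an embedding, as required.

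There is no substantive obstacle; the only point to be careful with is ensuring that $tf(p)$ stays within the radius of the tubular neighbourhood, which is handled by the uniform bound on $f$ coming from compactness of $\Sigma$.
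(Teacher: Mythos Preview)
Your proof is correct and takes essentially the same approach as the paper: both factor $\iota_{tf}$ as the normal exponential map composed with the graph embedding $p\mapsto (p,tf(p))$, and both use compactness of $\Sigma$ to control the normal direction. The only cosmetic difference is that you invoke the tubular neighbourhood theorem as a black box, whereas the paper sketches its proof (checking that the differential of the normal exponential map is an isomorphism along $\Sigma\times\{0\}$ and then using a compactness argument for global injectivity).
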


\begin{proof}
Consider the map $\mathcal E:\Sigma\times\R\to M$ sending $(p,t)$ to $\exp_p(t\nu(p))$. For every $p\in\Sigma$, the differential of $\mathcal E$ at $(p,0)$ is easily checked to be an isomorphism, hence there exists $\epsilon>0$ such that $\mathcal E|_{\Sigma\times(-\epsilon,\epsilon)}$ is a local diffeomorphism. Now, since $\mathcal E$ is injective on $\Sigma\times\{0\}$, and locally injective around every point of $\Sigma\times\{0\}$, it follows from compactness of $\Sigma$ that, up to taking a smaller $\epsilon$,  $\mathcal E|_{\Sigma\times(-\epsilon,\epsilon)}$ is injective (see for instance \cite[Lemma 3.6]{choudhury-mazzoli-seppi}), and thus a diffeomorphism onto its image. This concludes the proof, because $\iota_{t\!f}(p)=\mathcal E(p,tf(p))$ and the map $p\mapsto(p,tf(p))$ is an embedding of $\Sigma$ in $\Sigma\times\R$.
\end{proof}

Let us denote by $\Sigma_f$ the image of $\iota_f$. Observe that $\Sigma_0=\Sigma$ and $\iota_0$ is the inclusion of $\Sigma$ in $M$. When $t$ is sufficiently small, so that $\iota_{t\!f}$ is an embedding as in Lemma \ref{lemma:embedded}, we denote by $I_{t\!f}=(\iota_{t\!f})^*h$ the first fundamental form, by $\II_{t\!f}$ the second fundamental form, and  by $B_{t\!f}$ the shape operator, of $\Sigma_{t\!f}$, pulled-back to $\Sigma$ via $\iota_{t\!f}$. The second fundamental form and shape operator are computed with respect to the unit normal vector field that continuously extends $\nu$.

\begin{lemma}
Let $\Sigma$ be a closed embedded surface in a hyperbolic three-manifold $M$, and let $f\in C^\infty(\Sigma)$. Then
\begin{equation}\label{eq:var B}
\left.\frac{d}{dt}\right|_{t=0} B_{t\!f}=\mathrm{Hess}^\Sigma \!f+f(B^2-\mathbbm{1})~,\end{equation}
where $\mathrm{Hess}^\Sigma \!f$ is the $(1,1)$-Hessian of $\Sigma$, defined by $I(\mathrm{Hess}^\Sigma \!f(v),w)=(\nabla_v^\Sigma df)(w)$, and $\mathbbm 1$ denotes the identity operator on $T\Sigma$.

In particular, if $\Sigma$ is minimal and $p$ is a point where $\|\II\|^2(p)=2$, then
\begin{equation}\label{eq:var B at Z}
\left(\left.\frac{d}{dt}\right|_{t=0} B_{t\!f}\right)_p=(\mathrm{Hess}^\Sigma \!f)_p~.
\end{equation}
\end{lemma}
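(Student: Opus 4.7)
The plan is to derive formula \eqref{eq:var B} by computing the first variations of $I_{tf}$ and $\II_{tf}$ separately, then extracting $\dot B_{tf}$ through the Weingarten identity $\II_{tf}(X,Y) = I_{tf}(B_{tf}(X),Y)$. View $F(p,t) := \iota_{tf}(p) = \exp_p(tf(p)\nu(p))$ as a smooth map $\Sigma\times(-\varepsilon,\varepsilon)\to M$; its $t$-derivative at $t=0$ is the purely normal field $V = f\nu$. The two elementary identities driving the computation are $\nabla^M_{\partial_j}(f\nu) = (\partial_j f)\nu - fB(\partial_j)$ (using $B = -\nabla^M\nu$) and the Gauss formula applied to the tangent field $B(Y)$, giving $\nabla^M_X B(Y) = \nabla^\Sigma_X B(Y) + \III(X,Y)\nu$ (here I use that $B$ is self-adjoint, so $\II(X,B(Y)) = I(B(X),B(Y)) = \III(X,Y)$).

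The first variation of the induced metric is the standard identity
$$\left.\frac{d}{dt}\right|_{t=0} I_{tf}(X,Y) = -2f\,\II(X,Y) = -2f\,I(B(X),Y),$$
obtained by differentiating $h(\partial_i F, \partial_j F)$ and using $[\partial_t,\partial_i]=0$. For the second fundamental form, differentiate $\II_{tf}(X,Y) = h(\nabla^M_{(\iota_{tf})_*X}(\iota_{tf})_*Y,\nu_{tf})$ at $t=0$. Three contributions appear: (i) commuting $\nabla_{\partial_t}$ past $\nabla_{\partial_i F}$ produces an ambient Riemann term, which in $\HH^3$ of constant curvature $-1$ equals $-f\,I(X,Y)$; (ii) a $-f\,\III(X,Y)$ term arises when projecting $\nabla^M_{\partial_i}(-fB(\partial_j))$ onto $\nu$ via the Gauss formula above; (iii) the tangential derivatives of $f\nu$ yield $\partial_i\partial_j f$, whose Christoffel correction is supplied by $\nabla_{\partial_t}\nu_{tf}|_{t=0} = -\mathrm{grad}^\Sigma f$ (derived from $0 = \partial_t h(\partial_j F,\nu_{tf})$) paired with the tangential part of $\nabla^M_{\partial_i}\partial_j$. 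Assembling,
$$\left.\frac{d}{dt}\right|_{t=0}\II_{tf}(X,Y) = \mathrm{Hess}^\Sigma\! f(X,Y) - f\,I(X,Y) - f\,\III(X,Y).$$

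Differentiating Weingarten at $t=0$ gives $\frac{d}{dt}\II_{tf} = (\frac{d}{dt}I_{tf})(B(\cdot),\cdot) + I(\dot B_{tf}(\cdot),\cdot) = -2f\,\III + I(\dot B_{tf}(\cdot),\cdot)$. Substituting the two displays above and canceling the $\III$ terms leaves
$$I(\dot B_{tf}(X),Y) = \mathrm{Hess}^\Sigma\! f(X,Y) + f\,I\bigl((B^2 - \mathbbm{1})X,Y\bigr),$$
which is \eqref{eq:var B}. For \eqref{eq:var B at Z}: if $\Sigma$ is minimal and $\|\II\|^2(p)=2$, then the principal curvatures at $p$ equal $\pm 1$, so $B^2(p)=\mathbbm{1}_{T_p\Sigma}$ and the $f(B^2-\mathbbm{1})$ term vanishes at $p$. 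The main obstacle is simply keeping the sign conventions straight, since the paper's choice $B=-\nabla^M\nu$ differs from several standard references. A useful sanity check is the constant case $f\equiv c$, where $\iota_{tf}$ is the normal flow at speed $c$: the formula predicts $\dot B = c(B^2-\mathbbm{1})$, which for the geodesic sphere of radius $r$ in $\HH^3$ (with $B = -\coth(r)\mathbbm{1}$) gives $c\csch^2(r)\mathbbm{1}$, matching the direct calculation of $\frac{d}{dt}|_{t=0}(-\coth(r+tc))\mathbbm{1}$.
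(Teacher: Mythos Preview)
Your proof is correct and follows essentially the same approach as the paper: compute the first variations of $I_{tf}$ and $\II_{tf}$, then extract $\dot B$. The paper cites the formula $\dot\II = \nabla^\Sigma df - f(I+\III)$ from Andrews and differentiates $B_{tf}=I_{tf}^{-1}\II_{tf}$, whereas you derive the variation of $\II$ from scratch and differentiate the Weingarten identity instead; these are equivalent manipulations, and your argument for $B^2(p)=\mathbbm{1}$ is the same as the paper's Cayley--Hamilton step.
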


\begin{proof}
The following formula for the variation of the second fundamental form holds, see \cite[Theorem 3-15]{andrewsJDG}:
\begin{equation}\label{eq:var II}
\left.\frac{d}{dt}\right|_{t=0} \II_{t\!f}=\nabla^\Sigma df-f(I+\III)~.
\end{equation}
We shall combine \eqref{eq:var II} with the well-known identity
$$\left.\frac{d}{dt}\right|_{t=0} I_{t\!f}=-2f\II~,$$
which implies 
\begin{equation}\label{eq:var I-1}
\left.\frac{d}{dt}\right|_{t=0} I^{-1}_{t\!f}=2fI^{-1}\II I^{-1}=2fBI^{-1}~.
\end{equation}
So, differentiating $B_{t\!f}=I^{-1}_{t\!f}\II_{t\!f}$ and using \eqref{eq:var II} and \eqref{eq:var I-1}, we obtain the desired formula \eqref{eq:var B}. 

For the second part, if $\Sigma$ is minimal, then $\tr(B)=0$, and if moreover the principal curvatures are $\pm 1$ at $p$, then $\det(B)=-1$ at $p$. By the Cayley-Hamilton Theorem, $B^2-\tr(B)B+\det(B)\mathbbm 1=0$, which implies $B^2=\mathbbm 1$ at the point $p$. Hence the last term in \eqref{eq:var B} vanishes, and this concludes the proof.
\end{proof}

In the following lemma, we compute the variation of the principal curvatures of $\Sigma_{t\!f}$ at $p$, again in the hypothesis that $\Sigma$ is minimal and that $p$ is such that $\|\II\|^2(p)=2$. For $p$ outside the zeros of $B$ and for small $t$, we denote by $\lambda^+_{t\!f}(p)$ (resp. $\lambda^-_{t\!f}(p)$) the positive (resp. negative) eigenvalue of $B_{t\!f}$.

Observe that $\lambda^+_{t\!f}(p)$ and $\lambda^-_{t\!f}(p)$ are smooth functions of $t$ (and $p$) as long as they are different. Indeed, $B_{t\!f}$ depends smoothly on $t$ and $p$, and the eigenvalues of a matrix are smooth functions in the entries of the matrix, as long as the discriminant does not vanish. So, when $\Sigma$ is minimal, $\lambda^+_{t\!f}(p)$ and $\lambda^-_{t\!f}(p)$ are smooth functions of $t$ outside of the zeros of $B$, for small $t$.

We introduce here additional notation. We denote by $(e_+,e_-)$ a smooth local frame of unit eigenvectors of $B$, where $e_\pm$ is an eigenvector of $\lambda^\pm$, defined outside the zeros of $B$. It is uniquely determined up to changing sign to $e_\pm$, which does not affect the following statements.

\begin{lemma}\label{lemma:variation principal curv}
Let $\Sigma$ be a closed embedded minimal  surface in a hyperbolic three-manifold $M$, and let $f\in C^\infty(\Sigma)$. Suppose $\|\II\|^2(p)=2$.
Then
$$\left.\frac{d}{dt}\right|_{t=0}\lambda^\pm_{t\!f}(p)=(\nabla^\Sigma df)(e_\pm(p),e_\pm(p))~.$$
\end{lemma}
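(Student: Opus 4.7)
The plan is to apply standard first-order perturbation theory for simple eigenvalues to the family of self-adjoint operators $B_{tf}$, combined with the explicit formula \eqref{eq:var B at Z} already established for the derivative $\left.\tfrac{d}{dt}\right|_{t=0} B_{tf}$ at $p$.

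First, observe that by the hypothesis $\|\II\|^2(p)=2$ and the minimality of $\Sigma$, the eigenvalues of $B$ at $p$ are $+1$ and $-1$, hence simple. As already noted just before the statement of the lemma, this implies that $\lambda_{tf}^\pm$ are smooth functions of $t$ and of the point, in a neighbourhood of $(0,p)$; moreover, one may choose a smooth family of $I_{tf}$-unit eigenvectors $e_\pm^{tf}$ of $B_{tf}$ that reduces to $e_\pm$ at $t=0$. Differentiating the eigenvalue equation $B_{tf}(e_\pm^{tf})=\lambda_{tf}^\pm e_\pm^{tf}$ at $t=0$ and evaluating at $p$ yields
$$\left.\tfrac{d}{dt}\right|_{t=0}\!B_{tf}(e_\pm)+B\bigl(\left.\tfrac{d}{dt}\right|_{t=0}\!e_\pm^{tf}\bigr)=\left.\tfrac{d}{dt}\right|_{t=0}\!\lambda_{tf}^\pm\, e_\pm+\lambda^\pm \left.\tfrac{d}{dt}\right|_{t=0}\!e_\pm^{tf}.$$
Taking the $I$-inner product with $e_\pm$ and using that $B$ is self-adjoint with respect to $I$ with $B(e_\pm)=\lambda^\pm e_\pm$, the two terms involving $\left.\tfrac{d}{dt}\right|_{t=0}\!e_\pm^{tf}$ cancel out. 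Since $e_\pm$ is $I$-unit, this leaves
$$\left.\tfrac{d}{dt}\right|_{t=0}\!\lambda_{tf}^\pm(p)=I\!\left(\left.\tfrac{d}{dt}\right|_{t=0}\!B_{tf}(e_\pm(p)),\, e_\pm(p)\right).$$

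It then suffices to substitute \eqref{eq:var B at Z} in the right-hand side and unfold the definition of the Hessian from the previous lemma, namely $I(\mathrm{Hess}^\Sigma\! f(v),v)=(\nabla^\Sigma df)(v,v)$, to obtain the claimed formula. I do not expect any genuine obstacle: the only point requiring a (brief) justification is the cancellation of the terms involving $\left.\tfrac{d}{dt}\right|_{t=0}\!e_\pm^{tf}$, which relies on $I$-self-adjointness of $B$ at $t=0$ and on the fact that we only need to differentiate once at $t=0$, so the time-dependence of the metrics $I_{tf}$ never enters the computation.
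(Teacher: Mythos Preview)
Your argument is correct. The cancellation of the eigenvector-derivative terms indeed only requires self-adjointness of $B$ with respect to $I$ at $t=0$, so the time-dependence of $I_{tf}$ is irrelevant, exactly as you note. The paper's proof takes a different but equally elementary route: rather than differentiating the eigenvalue equation, it differentiates the explicit $2\times 2$ formula $\lambda^\pm_{tf} = \tfrac{1}{2}\bigl(\tr(B_{tf}) \pm \sqrt{\tr(B_{tf})^2 - 4\det(B_{tf})}\bigr)$ and simplifies using $\tr(B)_p=0$ and $\det(B)_p=-1$, which amounts to reading off the diagonal entries of $\dot B$ in the frame $(e_+,e_-)$. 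This avoids introducing the family $e_\pm^{tf}$ altogether and works purely with invariants of $B_{tf}$. Your perturbation-theoretic approach has the modest advantage of generalising verbatim beyond dimension two, while the paper's computation is slightly more self-contained; in the present $2\times 2$ setting the two are essentially interchangeable.
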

\begin{proof}
Let us write, in the frame  $(e_+,e_-)$ around a point $p$ such that $\|\II\|^2(p)=2$,
$$B_{t\!f}=\begin{pmatrix} \alpha(t) & \beta(t) \\ \gamma(t) & \delta(t) \end{pmatrix}~.$$
Now, differentiating the formula $\lambda^\pm_{t\!f}=(1/2)(\tr(B_{t\!f})\pm\sqrt{\tr(B_{t\!f})^2-4\det(B_{t\!f})})$ and using that $\tr(B)_p=0$ and $\det(B)_p=-1$, we obtain
\begin{align*} \left.\frac{d}{dt}\right|_{t=0}\lambda^\pm_{t\!f}(p)&=\frac{1}{2}\left(\tr\left(\frac{d}{dt} B_{t\!f}\right)_{\!\!p} \pm \tr\left(B\frac{d}{dt} B_{t\!f}\right)_{\!\!p} \right) \\
& =\frac{1}{2}\left(\left(\left.\frac{d}{dt}\right|_{t=0}\alpha(t)+\left.\frac{d}{dt}\right|_{t=0}\delta(t)\right)\pm \left(\left.\frac{d}{dt}\right|_{t=0}\alpha(t)-\left.\frac{d}{dt}\right|_{t=0}\delta(t)\right)\right)\end{align*}
That is, 
$$\left.\frac{d}{dt}\right|_{t=0}\lambda^+_{t\!f}(p)=\left.\frac{d}{dt}\right|_{t=0}\alpha(t) \qquad\text{and}\qquad \left.\frac{d}{dt}\right|_{t=0}\lambda^-_{t\!f}(p)=\left.\frac{d}{dt}\right|_{t=0}\delta(t)~.$$
Combining with \eqref{eq:var B at Z} and the fact that $(e_+,e_-)$ is an orthonormal frame, this concludes the proof.
\end{proof}
In order to prove Theorem \ref{thm:main}, it will be essential to understand the locus of $p$ in $\Sigma$ where the principal curvatures are equal to $1$ and $-1$. We thus introduce the following notation.

\begin{definition}\label{defi ZZ}
  Let $\Sigma$ be a closed minimal surface in a hyperbolic three-manifold $M$ such that $\|\II\|^2\leq 2$. We denote 
  $$\ZZ:=\{p\in\Sigma\,|\,\|\II\|^2(p)= 2\}~.$$
\end{definition}

In Section \ref{sec:ZZ} we investigate the structure of $\ZZ$. Before that, we introduce special charts for $\Sigma$ around points of $\ZZ$, which will be very useful for the analysis.

\subsection{Half-translation structures}

Given a minimal surface $\Sigma$ in $M$, it is well-known (from \cite{hopf}, see also \cite{lawson,hopfbook,tromba}) that the second fundamental form of $\Sigma$ is the real part of a holomorphic quadratic differential, that we will denote by $\mathfrak q$, on $(\Sigma,[I])$, where $[I]$ denotes the complex structure on $\Sigma$ associated with the first fundamental form $I$. By the holomorphicity condition, the zeros of $\mathfrak q$ are isolated points. For any point $p$ in $\Sigma\setminus \{\mathfrak q=0\}$, one can find a local chart $z:U\to\C$ around $p$ such that $\mathfrak q$ is expressed as $dz^2$. Indeed, if $w$ is an arbitrary local complex chart for $(\Sigma,[I])$, then $\mathfrak q$ is expressed as $\xi(w)dw^2$, and it suffices to choose $z$ a determination of the square root of $\xi(z)$, which is possible as long as $\xi$ does not vanish and $U$ is simply connected.

This procedure endows $\Sigma\setminus \{\mathfrak q=0\}$ with an atlas with values in $\C$ such that the change of coordinates, by an elementary computation, are of the form $z\mapsto \pm z + c$, for $c\in\C$. We shall call any chart of this form a \emph{flat coordinate} for $\Sigma$. That is, in the language of $(G,X)$-structures, $\Sigma\setminus \{\mathfrak q=0\}$ is naturally endowed with a half-translation structure. 

In particular, since the transformations of the form $z\mapsto \pm z + c$ are isometries for the flat metric $|dz|^2=dx^2+dy^2$, $\Sigma\setminus \{\mathfrak q=0\}$ is also naturally endowed with a flat Riemannian metric compatible with the complex structure of $[I]$. We will call this flat metric the \emph{Euclidean metric}.  We remark that the Euclidean metric has a conical singularity with angle an integer multiple of $\pi$ around the zeros of $\mathfrak q$, but we will never need to analyse the structure around the zeros in this work. 

The following elementary lemma permits to express the embedding data of $\Sigma$ in a flat coordinate.

\begin{lemma} \label{lemma:coshgordon}
Let $\Sigma$ be an embedded minimal surface in a hyperbolic three-manifold $M$, and let $z:U\to\C$ be a flat coordinate. Then, in $U$, $I=e^{2u}(dx^2+dy^2)$, $\II=dx^2-dy^2$ and $B=e^{-2u}(dx\otimes\partial_x - dy\otimes\partial_y)$, where $z=x+iy$ and $u:U\to\R$ is a smooth function solving
\begin{equation}\label{eq:coshgordon}
\Delta u=2\cosh(2u)~,
\end{equation} 
for $\Delta=\partial^2/\partial x^2+\partial^2/\partial_y^2$ the Laplace operator on $\R^2$.

Moreover, $\|\II\|^2=2e^{-4u}$. That is, if $\Sigma$ satisfies $\|\II\|^2\leq 2$ then $u\geq 0$, and if $p\in\ZZ$ then $u(p)=0$.
\end{lemma}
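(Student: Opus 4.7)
The plan is to unpack the definitions one step at a time, using Hopf's classical fact that the second fundamental form of a minimal surface is the real part of a holomorphic quadratic differential, together with the Gauss equation.

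First, by the defining property of a flat coordinate, the Hopf differential $\mathfrak{q}$ equals $dz^{2}$ on $U$. Since $\II$ is the real part of $\mathfrak{q}$, expanding $\Real(dz^{2}) = \Real\bigl((dx+idy)^{2}\bigr)$ gives directly $\II = dx^{2} - dy^{2}$. Next, the flat coordinate $z$ is in particular a holomorphic chart for the complex structure $[I]$, so $I$ is conformal to $|dz|^{2} = dx^{2} + dy^{2}$; this yields the existence of a smooth $u$ with $I = e^{2u}(dx^{2}+dy^{2})$. The shape operator is then recovered from the Weingarten identity $\II(X,Y) = I(B(X),Y)$: in matrix form with respect to $(\partial_{x},\partial_{y})$, one has $B = I^{-1}\II = \mathrm{diag}(e^{-2u}, -e^{-2u})$, which is the claimed expression. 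In particular, the principal curvatures are $\lambda^{\pm} = \pm e^{-2u}$, and so $\|\II\|^{2} = \tr(B^{2}) = 2e^{-4u}$ as stated; the inequality $u \geq 0$ under the hypothesis $\|\II\|^{2}\leq 2$, and the equality $u(p)=0$ for $p \in \ZZ$, are then immediate.

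The main step is to derive the PDE $\Delta u = 2\cosh(2u)$. I would apply the Gauss equation, $K_{I} = -1 + \det(B)$, which here reads
\begin{equation*}
K_{I} = -1 + \lambda^{+}\lambda^{-} = -1 - e^{-4u}.
\end{equation*}
On the other hand, for any conformal metric $e^{2u}(dx^{2}+dy^{2})$ on a planar domain, the classical formula for the Gaussian curvature gives $K_{I} = -e^{-2u}\Delta u$, where $\Delta$ is the flat Laplacian. Equating the two expressions and multiplying by $-e^{2u}$ yields $\Delta u = e^{2u} + e^{-2u} = 2\cosh(2u)$, as desired.

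There is essentially no obstacle, since each ingredient is either a direct computation or a standard identity; the only mild care required is to track signs consistently, in particular to make sure the Gauss equation is written in the correct form for an ambient curvature $-1$ and that the Gaussian curvature formula for a conformal metric is applied with the right sign (since this determines whether one obtains $\cosh$ or $\sinh$ on the right-hand side).
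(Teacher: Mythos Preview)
Your proof is correct and follows essentially the same route as the paper's: the expressions for $I$, $\II$, $B$ and $\|\II\|^2$ are read off from the definition of a flat coordinate, and the $\cosh$-Gordon equation is obtained by combining the Gauss equation with the curvature formula $K_I=-e^{-2u}\Delta u$ for a conformal metric. You simply unpack the ``follows immediately from the definitions'' step more explicitly than the paper does, and you have kept the signs straight in the Gauss equation, arriving correctly at $\Delta u = e^{2u}+e^{-2u}$.
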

\begin{proof}
The expressions of $I$, $\II$ and $B$, and the ``moreover'' part, follow immediately from the definitions. The $\cosh$-Gordon equation \eqref{eq:coshgordon} is then equivalent to Gauss' equation. Indeed, the curvature of $I=e^{2u}(dx^2+dy^2)$ equals $-e^{-2u}\Delta u$, so Gauss' equation $K_I=-1-\det B$ is equivalent to $\Delta u=e^{2u}(1+e^{-4u})=2\cosh(2u)$.
\end{proof}

\section{Maxima of the principal curvatures} \label{sec:ZZ}

In this section, we study the locus $\ZZ$ for closed minimal surfaces in $M$ satisfying the condition $\|\II\|^2\leq 2$. The two main results of the section are Proposition \ref{prop:points and curves} and Proposition \ref{prop:no geodesic}.

\subsection{Points and simple closed curves}

The following result gives a first important constraint on the structure of $\ZZ$. 

\begin{proposition}\label{prop:points and curves}
Let $\Sigma$ be a closed minimal surface in a hyperbolic three-manifold $M$ such that $\|\II\|^2\leq 2$. Then $ \ZZ$ is the disjoint union of a finite number of points and smooth simple closed curves.
\end{proposition}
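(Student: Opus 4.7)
The plan is to analyze $\ZZ$ locally around each of its points using the flat-coordinate description from Lemma \ref{lemma:coshgordon}. Since $\|\II\|^2 = 2e^{-4u}$ in any flat chart and vanishes at the zeros of the Hopf differential $\mathfrak q$ (because there $\II = 0$), the condition $\|\II\|^2(p) = 2$ forces both $u(p) = 0$ and $\mathfrak q(p) \neq 0$. Hence every point of $\ZZ$ lies in the domain of some flat chart, and in that chart $\ZZ$ coincides locally with the zero set of the non-negative smooth function $u$ solving the $\cosh$-Gordon equation $\Delta u = 2\cosh(2u)$.

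The next step is a Hessian analysis at $p \in \ZZ$. Because $u \geq 0$ and $u(p) = 0$, the point $p$ is a global minimum, so $\nabla u(p) = 0$ and $\Hess u(p)$ is positive semi-definite. Evaluating the equation at $p$ gives $\trace \Hess u(p) = \Delta u(p) = 2\cosh(0) = 2$, which yields a dichotomy: either (A) both eigenvalues of $\Hess u(p)$ are positive, in which case $p$ is a strict local minimum and therefore an isolated point of $\ZZ$, or (B) the eigenvalues are $0$ and $2$.

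In case (B), I would rotate the flat coordinates so that the eigenvalue $2$ corresponds to $\partial_y$ and apply the implicit function theorem to $\partial_y u$ at $p$: since $\partial_y u(p) = 0$ and $\partial_y^2 u(p) = 2 \neq 0$, this produces a smooth curve $\Gamma : y = \varphi(x)$ through $p$ on which $\partial_y u$ vanishes. Any point of $\ZZ$ near $p$ is a local minimum of $u$, so in particular $\partial_y u$ vanishes there, proving the inclusion $\ZZ \subset \Gamma$ locally. To upgrade this to an equality when $p$ is non-isolated, I would invoke real-analyticity: the $\cosh$-Gordon equation has an analytic nonlinearity, so $u$ is real analytic by standard elliptic regularity, $\varphi$ is real analytic by the analytic implicit function theorem, and hence the restriction $g(x) := u(x, \varphi(x))$ is a non-negative real-analytic function of one variable vanishing at $0$. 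By the identity theorem, either $g \equiv 0$ near $0$ (so $\ZZ$ locally equals the smooth arc $\Gamma$), or $0$ is an isolated zero of $g$ (so $p$ is isolated in $\ZZ$).

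Finally, I would assemble the global picture: the local dichotomy shows that each point of $\ZZ$ is either isolated in $\ZZ$ or admits a neighborhood in which $\ZZ$ is a smooth embedded arc, so the non-isolated part of $\ZZ$ is a $1$-submanifold of $\Sigma$. A short accumulation argument using the local structure and compactness of $\Sigma$ rules out infinite collections, both of isolated points and of arc-components: near any hypothetical accumulation point the local model (isolated point or single arc) immediately contradicts the existence of infinitely many distinct nearby components. It follows that $\ZZ$ is the disjoint union of finitely many isolated points and a compact $1$-manifold without boundary, i.e., finitely many smooth simple closed curves. I expect the main obstacle to be case (B): passing from the local inclusion $\ZZ \subset \Gamma$ to equality at non-isolated points, which genuinely requires the real-analyticity afforded by the $\cosh$-Gordon equation rather than mere smoothness.
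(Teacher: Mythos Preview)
Your argument is correct and follows essentially the same strategy as the paper's proof: work in a flat chart where $\ZZ=\{u=0\}$, use real analyticity of $u$ from elliptic regularity, rotate so that $u_{yy}(p)>0$, apply the implicit function theorem to $u_y$ to trap $\ZZ$ locally inside a single smooth curve, and finish by compactness. The one substantive difference is in how you extract the dichotomy ``isolated point or arc'': the paper invokes Lojasiewicz's Structure Theorem for real-analytic varieties to list the possible local models (point, single arc, or finitely many branches through the origin) and then uses the inclusion $\{u=0\}\subset\{u_y=0\}$ to rule out branching, whereas you bypass Lojasiewicz entirely by restricting $u$ to the implicit-function curve and applying the one-variable identity theorem to $g(x)=u(x,\varphi(x))$. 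Your route is more elementary and self-contained; the paper's route is shorter to state once Lojasiewicz is available. Your preliminary Hessian case split (both eigenvalues positive versus eigenvalues $\{0,2\}$) is a harmless refinement that the paper does not make explicit.
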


\begin{proof}
 Let $p\in\ZZ$ and let $z:U\to\C$ be a flat coordinate, for $U$ a neighbourhood of $p$. By Lemma \ref{lemma:coshgordon}, $\ZZ \cap U$ is, in the coordinate $z$, the zero set of a function $u:\Omega\to\R$, for $\Omega$ an open set in $\C$, solving the $\cosh$-Gordon equation \eqref{eq:coshgordon}. By elliptic regularity applied to \eqref{eq:coshgordon}, $u$ is real analytic (\cite{petrowsky,morrey}). Up to applying a translation to $z$, we can assume that $p$ corresponds to the origin, and up to a further rotation, we can assume that $u_{yy}(0)> 0$, since $\Delta u=2\cosh(2u)>0$. (Observe that this rotation will change the expression for $\II$, which in the original coordinate $z$ was $dx^2-dy^2$, by conjugation, but this will not affect the rest of the argument.) In particular, $u(0,y)$ is not identically zero. 
 
We now apply Lojasiewicz's Structure Theorem for analytic varieties, see \cite[Theorem 6.3.3]{krantz-parks}. In dimension 2, this states that, up to restricting the neighbourhood $\Omega$, $\{z\in\Omega\,|\,u(z)=0\}$ is one of the following:
\begin{enumerate}
\item the origin alone;\label{item:origin}
\item a smooth curve, homeomorphic to an interval, represented as the graph of a real analytic function of the $x$-coordinate, and containing the origin;\label{item:curve}
\item a union of finitely many ``branches'', namely pairwise disjoint smooth curves, homeomorphic to intervals, represented as the graphs of real analytic functions of the $x$-coordinate, all containing the origin in their closure.\footnote{In fact, although not necessary for our argument here, it is known by \cite[Corollary 2]{sullivan} that in case (3) there is an even number of branches.}
\end{enumerate}
We now claim that, inside such a $\Omega$, $\{u=0\}$ is in fact of the form \eqref{item:origin} or \eqref{item:curve}. Indeed, by Lemma \ref{lemma:coshgordon} again, the set of zeros of $u$ consists entirely of minima of $u$, and therefore it is contained inside the set $\{z\in\Omega\,|\,u_y(z)=0\}$. Since $u_{yy}(0)\neq 0$, by the implicit function theorem $\{u_y=0\}$ is a smooth curve, which is locally a graph over the $x$-axis. Hence $\{u=0\}$ is contained inside such smooth curve, and, by the trichotomy above, it is either an isolated point or a smooth curve. 

This shows that every point of $\ZZ$ has a neighbourhood which consists either of an isolated point or of a smooth curve. Since $\Sigma$ is compact and $\ZZ$ is closed in $\Sigma$, being the level set of the  continuous (in fact, real analytic) function $\|\II\|^2$, $\ZZ$ is compact, and therefore it consists of finitely many points and simple closed curves.
\end{proof}

\subsection{There are no geodesics in $\ZZ$}

The next main result of the section, Proposition \ref{prop:no geodesic}, roughly shows that no simple closed curve in $\ZZ$ is a geodesic for the first fundamental form. (Actually, as explained in Lemma \ref{lemma first or euclidean}, being geodesic for the first fundamental form is equivalent to being geodesic for the Euclidean metric.) This will be of fundamental importance in Section \ref{sec:function}. 

All the proofs of this subsection work, without further modification, for a curve contained in the critical set of the function $\|\II\|^2$, so we will state the results in this setting. So, given a function $f:\Sigma\to\R$, let us denote $$\mathrm{Crit}(f)=\{p\in\Sigma\,|\,df_p=0\}~.$$
Then let us denote 
  $$\CC:=\mathrm{Crit}(\|\II\|^2)\setminus \{p\in\Sigma\,|\,\|\II\|^2(p)=0\}~.$$
  Clearly, $\ZZ\subset\CC$. 
  
 \begin{remark}
  Taking away the zeros of $\|\II\|^2$ does not change anything in the conclusions. In fact,  the function $\|\II\|^2$ is a real analytic function on $\Sigma$, as a consequence of the analyticity of minimal surfaces and of the definition of the norm of the second fundamental form in \eqref{defi normII}. Since the zeros of  $\|\II\|^2$ are the zeros of the holomorphic quadratic differential $\mathfrak q$ whose real part is $\II$, they are isolated in $\Sigma$. By the properties of analytic functions, if $p$ is a zero, then there is a small neighbourhood $U$ of $p$ such that every critical point of $\|\II\|^2$ inside $U$ is a zero.\footnote{\label{note1}More precisely, there is a stronger version of Sard's Lemma for analytic functions (\cite{MR393394}): if $f$ is a real analytic function and $K$ is a compact set, then $f(K\cap\mathrm{Crit}(f))$ is finite.} Hence, zeros of $\|\II\|^2$ are isolated in the critical set. We removed them from the definition of $\CC$ only to keep the statements simpler.
\end{remark}

Before the proof of Proposition \ref{prop:no geodesic}, we need some preparation. First, the following well-known fact, which essentially follows from the ideas in \cite{epstein}, will be useful in several instances.

\begin{proposition}\label{prop:complete is proper}
If $\iota:\Sigma_0\to\HH^3$ is an immersion of a surface such that  the first fundamental form is complete and $\|\II\|^2\leq 2$, then $\iota$ is a proper embedding, and $\Sigma_0$ is diffeomorphic to $\R^2$.
\end{proposition}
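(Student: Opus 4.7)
The plan is to follow Epstein's approach: extend $\iota$ to a global diffeomorphism $F:\Sigma_0\times\R\to\HH^3$ via the normal exponential map $F(p,t):=\exp_{\iota(p)}(t\nu(p))$, from which properness, embeddedness and the topology of $\Sigma_0$ all fall out. The first step is the computation of $dF$ via Jacobi fields. Since $\HH^3$ has constant curvature $-1$, the Jacobi field along $t\mapsto F(p,t)$ with initial value $e_\pm$ and initial derivative $\nabla_{e_\pm}\nu=-B(e_\pm)=-k_\pm e_\pm$ is $(\cosh t-k_\pm\sinh t)\,e_\pm(t)$, where $e_\pm(t)$ is the parallel transport of $e_\pm$ along the normal geodesic. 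Writing this factor as $\tfrac{1}{2}\bigl((1-k_\pm)e^t+(1+k_\pm)e^{-t}\bigr)$ shows that, under the hypothesis $\|\II\|^2\leq 2$ in the minimal setting (which forces $k_\pm=\pm\lambda\in[-1,1]$), it is strictly positive for every $t\in\R$. Hence $dF$ is everywhere an isomorphism and $F$ is a local diffeomorphism.

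Next, I would show that the pullback $g:=F^*h$ is a complete Riemannian metric on $\Sigma_0\times\R$. Gauss' lemma together with the Jacobi computation give $g$ the warped shape $g=I_t+dt^2$, with $I_t(e_\pm,e_\pm)=(\cosh t-k_\pm\sinh t)^2\,I_0(e_\pm,e_\pm)$. The elementary inequality $\cosh t-k\sinh t\geq e^{-|t|}$, valid for $|k|\leq 1$, yields $I_t\geq e^{-2|t|}I_0$. Therefore any $g$-Cauchy sequence $(p_n,t_n)$ has bounded $t_n$-coordinate, and on such a bounded range of $t$ the above inequality transfers the $g$-Cauchy property to an $I_0$-Cauchy property for $p_n$. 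Completeness of $(\Sigma_0,I_0)$ then forces $(\Sigma_0\times\R,g)$ to be complete.

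Once $F$ is a local isometry between complete connected Riemannian manifolds, a classical result implies that $F$ is a surjective Riemannian covering; simple connectedness of $\HH^3$ then upgrades $F$ to a global diffeomorphism. Restricting to $\Sigma_0\times\{0\}$, $\iota$ is a proper embedding. Moreover $\Sigma_0\times\R\cong\R^3$ forces $\pi_1(\Sigma_0)=0$ and rules out $\Sigma_0\cong S^2$ (since $S^2\times\R\not\cong\R^3$), so $\Sigma_0\cong\R^2$. The main technical point throughout is the boundary behaviour of the Jacobi factor $\cosh t-k\sinh t$: when $|k|=1$ it collapses to $e^{\mp t}$ but never vanishes, which is exactly what maintains both the non-degeneracy of $dF$ and the exponential lower bound $I_t\geq e^{-2|t|}I_0$ used for completeness. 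This is what allows the argument, usually written for the open condition $|k_\pm|<1$, to extend to the closed condition of the proposition.
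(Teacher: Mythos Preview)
Your proof is correct and follows exactly the Epstein-style argument the paper defers to (the paper gives no independent proof, only citing references for the strict case $\|\II\|^2<2$ and for the extension to the closed bound). Note only that your inference $\|\II\|^2\leq 2\Rightarrow k_\pm\in[-1,1]$, on which both the non-degeneracy of $dF$ and the completeness estimate $I_t\geq e^{-2|t|}I_0$ rest, uses minimality, which the proposition does not state explicitly; this is harmless since the paper only invokes the result for minimal surfaces, but it is worth flagging.
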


The proof can be found in \cite[Proposition 4.15]{elemam-seppi} under the assumption that $\|\II\|^2< 2$. The arguments extend under the hypothesis $\|\II\|^2\leq 2$, see \cite[Appendix A]{huang-lowe-seppi}.  

Second, we observe that the notion of geodesic for the first fundamental form and for the Euclidean metric is equivalent, for curves in $\CC$.

\begin{lemma}\label{lemma first or euclidean}
Let $\Sigma$ be an embedded minimal surface in a hyperbolic three-manifold $M$ such that $\|\II\|^2\leq 2$, and let $\gamma:I\to\CC$ be a regular curve. Then parallel transports along $\gamma$ for the first fundamental form $I$ and for the Euclidean metric coincide. In particular, $\gamma$ is a geodesic for $I$ if and only if it is a geodesic for the Euclidean metric.
\end{lemma}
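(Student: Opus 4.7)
The plan is to work locally in a flat coordinate $z = x+iy$ around any point of $\gamma$, which is available because the definition of $\CC$ excludes the zeros of $\|\II\|^2$ (hence of the holomorphic quadratic differential $\mathfrak q$). In such a chart, Lemma \ref{lemma:coshgordon} tells us that the two metrics are explicitly conformal: the Euclidean metric is $g_{\mathrm{Eucl}} = dx^2 + dy^2$, while $I = e^{2u} g_{\mathrm{Eucl}}$, and $\|\II\|^2 = 2 e^{-4u}$.

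The key observation is that the condition $\gamma \subset \CC$ translates into a pointwise condition on $du$. Since $\|\II\|^2 = 2 e^{-4u}$ we have $d(\|\II\|^2) = -8 e^{-4u}\, du$, so $d(\|\II\|^2)_p = 0$ if and only if $du_p = 0$ (as $1$-forms on $\Sigma$, not merely along $\gamma$). Hence at every point of $\gamma$ the full differential of the conformal factor vanishes.

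I would then invoke the standard formula comparing the Levi-Civita connections of two conformally related metrics $\widetilde g = e^{2u} g$: the difference tensor is
\[ D(X,Y) := \nabla^{\widetilde g}_X Y - \nabla^{g}_X Y = X(u)\, Y + Y(u)\, X - g(X,Y)\, \nabla^{g} u~. \]
At every point $p \in \gamma$ one has $du_p = 0$, so $D_p \equiv 0$. Therefore, for any vector field $V$ defined along $\gamma$,
\[ \nabla^{I}_{\dot\gamma} V = \nabla^{g_{\mathrm{Eucl}}}_{\dot\gamma} V \]
identically along $\gamma$, which is exactly the statement that the parallel-transport ODE is the same for $I$ and for $g_{\mathrm{Eucl}}$. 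Applying this to $V = \dot\gamma$ gives $\gamma$ is geodesic for $I$ if and only if it is geodesic for $g_{\mathrm{Eucl}}$.

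There is no real obstacle here, but two small things are worth being careful about. First, to turn this local argument into a global one, I would cover $\gamma(I)$ by flat coordinate neighborhoods (finitely many if $I$ is compact) and concatenate, using that parallel transport is uniquely determined by the connection. Second, I would phrase the geodesic equivalence for the parametrized equation $\nabla_{\dot\gamma}\dot\gamma = 0$; since the two connections genuinely agree along $\gamma$ (not just up to a conformal reparametrization), the equivalence holds for the given parametrization without needing to re-parametrize. This last feature is exactly what will make Lemma \ref{lemma first or euclidean} useful in the subsequent Proposition \ref{prop:no geodesic}, since the flat-coordinate analysis there most naturally produces Euclidean geodesics.
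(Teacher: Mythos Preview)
Your proof is correct and follows essentially the same approach as the paper: both observe that in a flat coordinate $I=e^{2u}(dx^2+dy^2)$ with $\|\II\|^2=2e^{-4u}$, so points of $\CC$ are exactly where $du=0$, and hence the Levi-Civita connection of $I$ agrees with the Euclidean one there. The paper phrases this as ``the Christoffel symbols vanish,'' while you use the conformal-change-of-connection formula; these are two packagings of the same computation.
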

\begin{proof}
It suffices to recall from Lemma \ref{lemma:coshgordon} that, in a flat coordinate $z$, the first fundamental form is expressed as $e^{2u}(dx^2+dy^2)$, the squared norm of the second fundamental form is $2e^{-4u}$. So the points of $\CC$ are the critical points of $u$, which means that all first derivatives of the metric tensor vanish at $\CC$, and therefore the Christoffel symbols vanish. 
\end{proof}

Third, we need to study some particular solutions of the equation \eqref{eq:coshgordon}, invariant by translations. The proof of Proposition \ref{prop:no geodesic} will then go by contradiction: assuming that a curve in $\CC$ can be a geodesic, this will imply that the minimal surface must coincide with the minimal surface associated to one of these translation-invariant solutions, and this will easily lead to a contradiction.

\begin{lemma}\label{lemma:ODE}
Let $\ell$ be a line in $\C$ and let $v_0\geq 0$. There exists a maximal solution $v:\Omega_\ell\to\R$, where $\Omega_\ell=\{z\in\C\,|\,d(z,\ell)<\delta\}$, to the equation $\Delta v=2\cosh(2v)$ such that $v|_{\ell}=v_0$ and $dv|_\ell=0$. Moreover, $I=e^{2v}(dx^2+dy^2)$ and $\II=dx^2-dy^2$ are the first and second fundamental forms of a minimal proper embedding of $\Omega_\ell$ in $\HH^3$. 
\end{lemma}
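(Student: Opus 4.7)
\medskip
\textbf{Proof plan.} Up to an isometry of $\C$ (which preserves both the equation $\Delta v = 2\cosh(2v)$ and the metric $|dz|^2$), I can assume that $\ell$ is the $x$-axis. The ansatz I will use is that the solution is translation-invariant in the $x$ direction, i.e.\ $v(x,y)=v(y)$, so that the PDE reduces to the autonomous ODE
\begin{equation}\label{pl:ode}
v''(y)=2\cosh(2v(y))\,,\qquad v(0)=v_0,\ v'(0)=0\,.
\end{equation}
By local existence and uniqueness for ODEs, \eqref{pl:ode} has a unique maximal solution on some interval $(-\delta,\delta)$, and the resulting $v(y)$ solves the original PDE on the strip $\Omega_\ell=\{|y|<\delta\}$, satisfies $v|_\ell=v_0$, $dv|_\ell=0$, and is automatically the maximal translation-invariant solution with those data. (Any solution with those data is translation-invariant by the uniqueness part of the Cauchy--Kovalevskaya theorem applied to the analytic equation \eqref{eq:coshgordon} with Cauchy data on the analytic curve $\ell$.)

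\medskip
To analyse \eqref{pl:ode}, I would use the standard trick of multiplying by $v'$ to obtain the first integral
$$\tfrac{1}{2}(v')^2=\sinh(2v)-\sinh(2v_0)\,,$$
so that $v$ is even in $y$, satisfies $v\geq v_0\geq 0$, and is strictly increasing on $(0,\delta)$. Separation of variables then gives
$$\delta=\int_{v_0}^{\infty}\frac{dv}{\sqrt{2\bigl(\sinh(2v)-\sinh(2v_0)\bigr)}}\,,$$
which is finite because the integrand decays like $e^{-v}$ as $v\to\infty$, and is integrable at $v=v_0$ (either because $v'(0)\ne 0$ when $v_0>0$, or because $v(y)\sim y^2$ near $0$ when $v_0=0$, using $v''(0)=2$). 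Thus $\delta<\infty$ and $v\to\infty$ as $y\to\pm\delta$.

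\medskip
For the geometric part, I set $I=e^{2v}(dx^2+dy^2)$ and $\II=dx^2-dy^2$, and verify that they satisfy the Gauss--Codazzi equations: Gauss reduces exactly to \eqref{pl:ode} (as in the proof of Lemma \ref{lemma:coshgordon}), and Codazzi holds because $\II=\Real(dz^2)$ is the real part of a holomorphic quadratic differential, giving a traceless Codazzi tensor. By the fundamental theorem of surface theory in $\HH^3$, this produces a minimal isometric immersion $\Omega_\ell\to\HH^3$, unique up to post-composition with an isometry of $\HH^3$. To upgrade this to a proper embedding via Proposition \ref{prop:complete is proper}, I need (i) $\|\II\|^2=2e^{-4v}\leq 2$, which follows from $v\geq v_0\geq 0$; and (ii) completeness of $I$. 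For completeness, the only potential escape routes are toward $y=\pm\delta$, and the $I$-length of a vertical ray from $0$ to the boundary is
$$\int_0^{\delta}e^{v(y)}\,dy=\int_{v_0}^{\infty}\frac{e^{v}\,dv}{\sqrt{2\bigl(\sinh(2v)-\sinh(2v_0)\bigr)}}\,,$$
which diverges since the integrand behaves like a positive constant as $v\to\infty$. This yields completeness of $I$ on $\Omega_\ell$, and Proposition \ref{prop:complete is proper} concludes.

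\medskip
\textbf{Expected obstacle.} The computations are all elementary; the only point requiring care is the dichotomy in the boundary behaviour of the ODE near $y=0$ depending on whether $v_0>0$ or $v_0=0$, and checking both the finiteness of $\delta$ and the divergence of the $I$-length integral in both regimes. Once this is handled, the rest follows from standard surface theory together with the already-cited Proposition \ref{prop:complete is proper}.
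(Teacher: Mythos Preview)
Your approach is correct and follows the same overall strategy as the paper: reduce the PDE to the autonomous ODE via translation invariance, establish finite-time blow-up ($\delta<\infty$), verify Gauss--Codazzi, prove completeness of $I$, and invoke Proposition~\ref{prop:complete is proper}. The technical details differ in two places. For $\delta<\infty$, the paper argues conceptually: if $\delta=\infty$ then $e^{2v}|dz|^2$ would be a conformal metric on all of $\C$ with curvature $\le -1$, contradicting the Ahlfors--Schwarz lemma; you instead compute $\delta$ as an explicit convergent integral. For completeness, the paper uses the inequality $e^{g}>g'/2$ together with divergence of $g$ at the endpoint, whereas your change-of-variables $dy=dv/v'$ gives the length integral directly and shows divergence since the integrand tends to a positive constant. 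Both routes are valid; yours is arguably more self-contained. Your appeal to Cauchy--Kovalevskaya to certify that any solution with these Cauchy data is translation-invariant is a nice point the paper leaves implicit.

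One small slip: in arguing integrability of $\displaystyle\int_{v_0}\frac{dv}{\sqrt{2(\sinh 2v-\sinh 2v_0)}}$ near the lower endpoint, you write ``because $v'(0)\ne 0$ when $v_0>0$''. This is false, since $v'(0)=0$ is your initial condition. The correct (and uniform) reason is that $\sinh(2v)-\sinh(2v_0)$ has a \emph{simple} zero at $v=v_0$, because its derivative $2\cosh(2v_0)$ is strictly positive there; hence the integrand behaves like $C/\sqrt{v-v_0}$ near $v_0$, which is integrable. No case distinction between $v_0>0$ and $v_0=0$ is needed.
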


\begin{proof}
Applying a translation and a rotation to $\C$, we can assume that $\ell=\{x=0\}$. (As for the proof of Proposition \ref{prop:points and curves}, this operation conjugates the form of $\II$ by a rotation, but does not affect the rest of the argument.) The desired solution $v$ then has the form 
$$v(x,y)=g(x)$$
 where $g:(-\delta,\delta)\to\R$ is the unique maximal solution of the ODE 
 \begin{equation}\label{eq:ode}
g''(x)=2\cosh(2g(x))
\end{equation}
with Cauchy data $g(0)=v_0$ and $g'(0)=0$. 

First, we claim that $\delta<+\infty$. By contradiction, if $\delta=+\infty$, then $e^{2v}(dx^2+dy^2)$ would be a conformal metric on $\C$ of curvature bounded above by $-1$, since by Lemma \ref{lemma:coshgordon} the cosh-Gordon \eqref{eq:coshgordon} equation is equivalent to the Gauss' equation, and therefore the curvature equals $-1-e^{-4u}\leq -1$. But such a conformal metric on $\C$ does not exist.\footnote{This is an immediate consequence of Ahlfors-Schwarz Lemma \cite{ahlfors}. If such a conformal metric exists, then for every $R>0$ its conformal factor would be smaller than the conformal factor $4/(R^2-|z|^2)$ of the Poincar\'e metric on the ball of radius $R$. Letting $R$ tend to infinity, such a conformal factor would be zero, giving a contradiction.}
Incidentally, we observe that $g\geq 0$, as a consequence of the conditions $g(0)>0$, $g'(0)=0$ and $g''>0$.

So, $v$ is defined on the domain $\Omega=\{|x|<\delta\}$, which is the maximal domain of definition. Since the domain is simply connected, by the fundamental theorem of surfaces one obtains a minimal immersion of $\Omega$ in $\HH^3$ with prescribed first and second fundamental forms. Its principal curvatures are in $[-1,1]$ since $v\geq 0$. In order to conclude the proof, by Lemma \ref{prop:complete is proper}, it suffices to show that the first fundamental form $I=e^{2v}(dx^2+dy^2)$ is complete. Since the vertical translations $(x,y)\mapsto(x,y+y_0)$ are isometries, it is enough to check that the curve $\eta=(\delta,\delta)\times\{0\}$ has infinite length. Its length is:
\begin{equation}\label{eq:completeness}
\mathrm{length}(\eta)=\int_{-\delta}^{\delta}e^{g(x)}dx=2\int_{0}^{\delta}e^{g(x)}dx~,
\end{equation}
and we want to show that this integral diverges. For this purpose, we multiply both sides of \eqref{eq:ode} by $g'(x)$ and integrate, to obtain
\begin{equation}\label{eq:ode2}
g'(x)^2=2\sinh(2g(x))-2\sinh(2v_0)~.
\end{equation}
A simple chain of inequalities gives:
\begin{equation}\label{eq:ode3}
e^{2g(x)}>\frac{\sinh(2g(x))}{2}\geq \frac{\sinh(2g(x))}{2}-\frac{\sinh(2v_0)}{2}=\frac{g'(x)^2}{4}~,
\end{equation}
that is, $e^{g(x)}>g'(x)/2$. Inserting this into \eqref{eq:completeness} we have
$$\mathrm{length}(\eta)>\int_{0}^{\delta}g'(x)dx=-{v_0}+\lim_{x\to\delta^-}g(x)~.$$
But $g$ must diverge as $x$ approaches $\delta$. Indeed, if $g$ had a finite limit as $x\to\delta^-$, then  $g'$ would too have a finite limit by \eqref{eq:ode3}, and by the Escape Lemma this would contradict that $\delta$ is the maximal time of existence. This shows that the first fundamental form is complete and thus concludes the proof.
\end{proof}

\begin{remark}
Let $\ell$ be the vertical axis in $\C$, let $v:\{|x|<\delta\}\to\R$ the solution provided by Lemma \ref{lemma:ODE} with initial condition $v|_\ell=v_0$ and $dv_\ell=0$, and consider the minimal immersion having first and second fundamental forms $I=e^{2v}(dx^2+dy^2)$ and $\II=dx^2-dy^2$. If $v_0=0$, then  the image of the immersion is the so-called ``mouse-hole'' minimal surface, which is a complete minimal surface invariant under a one-parameter group of parabolic isometries. If $v_0>0$, then the minimal surface is invariant by a one-parameter group of loxodromic isometries. Of course there are similar solutions for $v<0$, whose output is a minimal catenoid, which is not embedded anymore. The line $\ell$ is a line of curvature, and is mapped in $\HH^3$ to a horocycle (if $v_0=0$), a planar curve equidistant from a geodesic (if $v_0>0$), or a geodesic circle (if $v_0<0$),  by the corresponding minimal immersion. See \cite{docarmo-dajczer} for more details, and \cite[Section 4]{Zelnikov} for nice illustrations. 
\end{remark}

We are now ready to prove Proposition \ref{prop:no geodesic}.

\begin{proposition}\label{prop:no geodesic}
Let $\Sigma$ be a closed minimal surface in a hyperbolic three-manifold $M$ such that $\|\II\|^2\leq 2$. Then no non-constant curve $\gamma:(a,b)\to \CC$ is a geodesic with respect to $I$.  
\end{proposition}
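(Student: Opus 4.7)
The plan is to argue by contradiction: if such a $\gamma$ existed, unique continuation would force the minimal immersion to locally coincide with one of the translation-invariant examples of Lemma~\ref{lemma:ODE}, and then globalization to the universal cover would contradict the existence of a discrete cocompact isometric action of $\pi_1(\Sigma)$ on the resulting strip.

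First I would choose $p$ on the image of $\gamma$ and, using Lemma~\ref{lemma first or euclidean}, a flat coordinate $z=x+iy$ around $p$ in which $\gamma$ lies on the vertical axis $\ell=\{x=0\}$, with $p$ at the origin. Since $\gamma\subset\CC$, the function $u$ of Lemma~\ref{lemma:coshgordon} satisfies $du=0$ along $\ell$, so $u|_\ell$ is a constant $v_0\geq 0$ and $\partial_x u|_\ell=0$. Let $v:\Omega_\ell\to\R$ be the translation-invariant solution of cosh-Gordon provided by Lemma~\ref{lemma:ODE} with the same Cauchy data on $\ell$. Both $u$ and $v$ are real-analytic solutions of the same elliptic analytic PDE sharing Cauchy data on a non-characteristic hypersurface, so unique continuation (iterating the cosh-Gordon equation to match all mixed $x$--$y$ derivatives of $u-v$ on $\ell$, then invoking real-analyticity) yields $u\equiv v$ on an open strip around $\ell$ contained in $z(U)\cap\Omega_\ell$.

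Next I would globalize this local agreement. Lift $\iota$ to $\HH^3$ and let $\tilde\Sigma$ be the connected component of the preimage of $\Sigma$ through a chosen lift $\tilde p$ of $p$. By Proposition~\ref{prop:complete is proper}, $\tilde\Sigma$ is a properly embedded copy of $\R^2$ in $\HH^3$ realizing the universal cover of $\Sigma$, and $\pi_1(\Sigma)$ acts freely, properly discontinuously, and cocompactly on $\tilde\Sigma$ by restrictions of hyperbolic isometries. Let $\hat\Sigma\subset\HH^3$ be the image of the proper minimal embedding of $\Omega_\ell$ associated to $v$. The identity $u\equiv v$ combined with the fundamental theorem of surfaces produces an isometry $A$ of $\HH^3$ such that $A(\hat\Sigma)$ and $\tilde\Sigma$ coincide, together with their tangent planes, in a neighbourhood of $\tilde p$. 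The set
$$E=\{q\in\tilde\Sigma: q\in A(\hat\Sigma) \text{ and } T_q\tilde\Sigma=T_q A(\hat\Sigma)\}$$
is open by unique continuation for minimal surfaces (Aronszajn) and closed by properness of $A(\hat\Sigma)$ in $\HH^3$ together with continuity of tangent planes. Connectedness of $\tilde\Sigma$ gives $E=\tilde\Sigma$, and the symmetric inclusion yields $\tilde\Sigma=A(\hat\Sigma)$ as subsets of $\HH^3$.

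Finally, the induced metric on $\tilde\Sigma$ is isometric to $(\Omega_\ell, e^{2v(x)}(dx^2+dy^2))$ on the strip $(-\delta,\delta)\times\R$. Since $v$ is strictly convex with minimum on $\ell$ and symmetric in $x$, its curvature $K=-1-e^{-4v(x)}$ is a strictly monotone function of $|x|$, and a short check shows the full isometry group of this metric is $\R\rtimes(\Z/2)^2$, with identity component consisting of the $y$-translations; in particular every discrete subgroup is virtually cyclic. On the other hand, from $u\geq 0$ and the Gauss equation $K_I=-1-\det B$ one has $K_I\leq -1$ on $\Sigma$, so Gauss--Bonnet forces $\chi(\Sigma)<0$ and $\Sigma$ has genus at least $2$, so $\pi_1(\Sigma)$ is not virtually cyclic, contradicting the cocompact action above. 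The main obstacle is the globalization step: upgrading the local coincidence $u\equiv v$ to the global identification $\tilde\Sigma=A(\hat\Sigma)$ requires both Aronszajn-type unique continuation for the minimal surface equation and the properness supplied by Proposition~\ref{prop:complete is proper}, without which the surfaces $\tilde\Sigma$ and $A(\hat\Sigma)$ could separate and the contradiction with the $\pi_1(\Sigma)$-action would be lost.
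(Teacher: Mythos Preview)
Your argument is correct and follows the paper's strategy almost step for step: use Lemma~\ref{lemma first or euclidean} to straighten $\gamma$ in a flat chart, invoke unique continuation for the cosh-Gordon equation to match $u$ with the translation-invariant solution $v$ of Lemma~\ref{lemma:ODE}, and then use Proposition~\ref{prop:complete is proper} together with analytic continuation for minimal surfaces to promote the local coincidence to a global identification $\widetilde\Sigma=\Sigma_0$ in $\HH^3$. The only substantive difference is the final contradiction. The paper observes that $\Sigma_0$ carries the nowhere-vanishing quadratic differential $dz^2$, hence has no umbilical points, whereas the lift $\widetilde\Sigma$ of a closed surface of genus $\geq 2$ must have umbilical points because any holomorphic quadratic differential on such a surface has zeros. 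You instead compute the isometry group of the strip $(\Omega_\ell,e^{2v(x)}|dz|^2)$, note that its discrete subgroups are virtually cyclic, and contradict the free cocompact action of the non-virtually-cyclic group $\pi_1(\Sigma)$. Both endings are valid; the paper's umbilical-point argument is shorter and uses only the extrinsic data already in hand, while your version stays purely intrinsic and would transplant to any setting where one has the Riemannian identification $\widetilde\Sigma\cong\Omega_\ell$ without needing to track $\II$. One minor point worth tightening: when you rotate the flat coordinate to place $\ell$ on the vertical axis, $\II$ is no longer $dx^2-dy^2$ in that chart, so when you invoke the fundamental theorem of surfaces to match $\widetilde\Sigma$ with $\hat\Sigma$ you should use the correspondingly rotated second fundamental form on $\hat\Sigma$ (the paper sidesteps this by leaving $\ell$ in general position); this does not affect your intrinsic contradiction.
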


\begin{proof}
  Suppose by contradiction that $\gamma:(a,b)\to \CC$ is a geodesic. By Lemma \ref{lemma first or euclidean}, $\gamma$ is a geodesic also for the Euclidean metric, and therefore, in a flat coordinate $z:U\to\C$ (restricting the interval $(a,b)$ if necessary, so that $\gamma(a,b)\subset U$), the image of $\gamma$ is contained in a line $\ell\subset\C$. 
  
 By Lemma \ref{lemma:coshgordon}, the first fundamental form $I$ of $\Sigma$ equals $e^{2u}(dx^2+dy^2)$, and the second fundamental form equals $dx^2-dy^2$, where $u$ solves $\Delta u=2\cosh(2u)$ and, for every $p$ in the image of $\gamma$, $du(p)=0$. Hence $u$ equal to a constant, say $v_0$, on the image of $\gamma$. From the ``moreover'' part of Lemma  \ref{lemma:coshgordon}, $v_0\geq 0$. By the Cauchy-Kovalevskaya theorem, up to restricting the open set $U$, $u|_U=v|_U$, where $v$ is the solution constructed in Lemma \ref{lemma:ODE}. 
 
 Now, let $\pi:\widetilde \Sigma\to\Sigma$ the universal cover of $\Sigma$. Then $\pi^*I$ and $\pi^*\II$ are the first and second fundamental form of a complete minimal immersion in $\HH^3$, which is a proper embedding by Proposition \ref{prop:complete is proper}. By a minor abuse of notation, we denote such properly embedded minimal surface again by $\widetilde\Sigma$. Likewise, from Lemma \ref{lemma:ODE}, $e^{2v}(dx^2+dy^2)$ and $dx^2-dy^2$ on $\Omega_\ell$ are the first and second fundamental form of a proper minimal embedding in $\HH^3$. We denote by $\Sigma_0$ the image of such minimal embedding. 
 
 We claim that, up to applying an isometry of $\HH^3$, $\widetilde \Sigma=\Sigma_0$. Indeed, if $\widetilde U$ denotes a lift of $U$ to $\widetilde\Sigma$, the previous paragraph shows that the embedding data of $\widetilde\Sigma$, on a flat coordinate $z:\widetilde U\to\C$, agrees with the embedding data of $\Sigma_0$. Hence, by the uniqueness part of the fundamental theorem of surfaces in $\HH^3$, $\widetilde \Sigma$ and $\Sigma_0$ agree on a nonempty open set, after applying an isometry of $\HH^3$ to one of the two. The conclusion of the claim that $\widetilde \Sigma=\Sigma_0$ then follows by a standard analytic continuation argument. Let $S$ be the set of points $p\in\widetilde\Sigma$ such that $p\in\Sigma_0$, $T_p\widetilde\Sigma=T_p\Sigma_0$, and such that $\widetilde\Sigma$ and $\Sigma_0$, when represented locally as graphs over $T_p\widetilde\Sigma=T_p\Sigma_0$, have the same partial derivatives of every order. Then $S$ is nonempty because it contains $\widetilde U$, closed by smoothness of $\widetilde\Sigma$ and $\Sigma_0$, and open by analyticity of the solutions of the minimal surface equation. Since $\widetilde\Sigma$ is connected, $S=\widetilde\Sigma$, hence $\widetilde \Sigma\subset\Sigma_0$. But since $\widetilde \Sigma$ is properly embedded, $\Sigma_0=\widetilde\Sigma$.
 
This now provides a contradiction. On the one hand, by construction $\Sigma_0$ has no umbilical points (i.e. points where then second fundamental form vanishes), because its second fundamental form has globally the expression $dx^2-dy^2$ on $\Omega_\ell$. On the other hand, $\widetilde\Sigma$ must contain umbilical points, because its second fundamental form is the lift of the second fundamental form of $\Sigma$, and every holomorphic quadratic differential on a closed Riemann surface of genus at least $2$ vanishes at certain points. 
  \end{proof}

\section{Functions decreasing the curvature} \label{sec:function}

In this section we prove a crucial technical point in order to prove Theorem \ref{thm:main}, namely the following statement.

\begin{proposition}
Let $\Sigma$ be a closed minimal surface in a hyperbolic three-manifold $M$ such that $\|\II\|^2\leq 2$. Then there exists a smooth function $f:\Sigma\to\R$ such that, for every $p$ in $\ZZ$, 
\begin{equation}\label{eq:cond signs} 
\nabla^\Sigma df(e_+(p),e_+(p))=-1\qquad\text{and}\qquad \nabla^\Sigma df(e_-(p),e_-(p))=1~.
\end{equation}
\end{proposition}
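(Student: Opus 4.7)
\noindent\emph{Plan.} By Proposition~\ref{prop:points and curves}, $\ZZ$ is a finite disjoint union of isolated points $\{p_j\}$ and smooth simple closed curves $\{\gamma_k\}$. Choose pairwise disjoint open neighbourhoods $U_j, U_k$ of these components, and smooth cutoffs $\chi_j, \chi_k$ supported in $U_j, U_k$ and equal to $1$ on smaller neighbourhoods of the corresponding component. I will construct local functions $f_j$ on $U_j$ and $f_k$ on $U_k$ each realising \eqref{eq:cond signs} on the corresponding component of $\ZZ$, and set $f := \sum_j \chi_j f_j + \sum_k \chi_k f_k$. Since $d\chi$ and $\mathrm{Hess}(\chi)$ vanish on $\ZZ$, the condition \eqref{eq:cond signs} is preserved by this gluing.

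For an isolated point $p_j \in \ZZ$, since $\|\II\|^2(p_j) = 2 \neq 0$, $p_j$ is not a zero of $\mathfrak q$, so a flat chart $z = x + iy$ exists around $p_j$; by Lemma~\ref{lemma:coshgordon}, $I$ agrees with the Euclidean metric at $p_j$ and $e_+ = \partial_x, e_- = \partial_y$. Therefore $f_j(x, y) := \tfrac{1}{2}(y^2 - x^2)$ has $I$-Hessian equal to $\mathrm{diag}(-1, 1)$ in the principal basis at $p_j$, giving \eqref{eq:cond signs}.

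The substantive case is a closed curve $\gamma := \gamma_k \subset \ZZ$. Work in Fermi coordinates $(s, r) \in \R/L\Z \times (-\varepsilon, \varepsilon)$ for the (globally defined) Euclidean metric around $\gamma$, with $s$ Euclidean arc length on $\gamma$, and try the ansatz $f_k(s, r) := F(s) + r\,G(s) + \tfrac{r^2}{2}\, H(s)$ with $F, G, H$ smooth and periodic. Let $\theta(s)$ denote the angle the unit tangent $T(s)$ makes with $e_+$, well defined modulo $\pi$; then $\cos 2\theta$ and $\sin 2\theta$ are smooth on $\gamma$, and the Euclidean geodesic curvature is $\kappa = \theta'$. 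A computation of the covariant $I$-Hessian at $r = 0$, using the Christoffel symbols of $g = \phi(s, r)^2\, ds^2 + dr^2$ with $\phi(s, 0) = 1$, $\phi_r(s, 0) = -\kappa$, turns \eqref{eq:cond signs} into
$$H + F'' - \kappa G = 0, \qquad \cos 2\theta\, (F'' - \kappa G) - \sin 2\theta\, (G' + \kappa F') = -1.$$
Parametrising the solutions of the second equation by a free real function $\mu(s)$ via $(F'' - \kappa G) + i (G' + \kappa F') = (-1 + i\mu)\, e^{-2i\theta}$ and setting $W := F' + iG$, the system reduces to the first-order linear complex ODE
$$ W' + i\kappa W = (-1 + i\mu)\, e^{-2i\theta} \quad \text{on } \R/L\Z. $$

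Since the rotational part of the half-translation holonomy of $\gamma$ lies in $\{\pm\mathrm{id}\}$, we have $e^{i\Theta(L)} \in \{\pm 1\}$ for $\Theta(s) := \int_0^s \kappa(\sigma)\, d\sigma$. If $e^{i\Theta(L)} = -1$, the ODE admits a unique periodic $W$ for any choice of $\mu$. If $e^{i\Theta(L)} = 1$, then using $\Theta = \theta - \theta(0)$, periodicity of $W$ is equivalent to the linear constraint
$$\int_0^L \mu(s)\, e^{-i\theta(s)}\, ds \;=\; -i\, \overline{c}, \qquad c := \int_0^L e^{i\theta(s)}\, ds,$$
or equivalently, $\int_0^L \mu\cos\theta\, ds = -\Imagin c$ and $\int_0^L \mu\sin\theta\, ds = \Real c$. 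The linear map $\mu \mapsto \bigl(\int_0^L\mu\cos\theta\, ds,\, \int_0^L\mu\sin\theta\, ds\bigr) \in \R^2$ is surjective iff $\cos\theta$ and $\sin\theta$ are linearly independent on $S^1$, iff $\theta$ is non-constant, iff $\gamma$ is not a Euclidean geodesic---which holds by Lemma~\ref{lemma first or euclidean} combined with Proposition~\ref{prop:no geodesic}. The remaining freedom in $\mu$ (and in the initial datum $W(0)$, in the trivial-holonomy case) suffices to further arrange $\int_0^L \Real W(\sigma)\, d\sigma = 0$, so that $F(s) := F(0) + \int_0^s \Real W(\sigma)\, d\sigma$ is also periodic; then $H := \kappa G - F''$ is periodic, and the ansatz gives $f_k$. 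The main obstacle is this last periodic solvability in the trivial-rotational-holonomy case, which is precisely what Proposition~\ref{prop:no geodesic} overcomes.
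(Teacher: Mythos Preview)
Your approach is correct and genuinely different from the paper's. The paper works directly in flat coordinates via the developing map $\mathrm{dev}:\widetilde V_i\to\C$ and splits into holonomy cases: when the holonomy is trivial or a half-turn, the function $(-x^2+y^2)/2$ is already invariant and descends (Lemma~\ref{lm:trivial hol}); when the holonomy is a nontrivial translation, the paper modifies $(-x^2+y^2)/2$ on a single small arc of $\gamma$ by an explicit construction (Proposition~\ref{prop:technical statement}) that corrects the translation defect, using that the developed curve is not contained in a line. You instead pass to Fermi coordinates for the Euclidean metric $|\mathfrak q|$, make a quadratic-in-$r$ ansatz, and reduce everything to the single first-order linear ODE $W'+i\kappa W=(-1+i\mu)e^{-2i\theta}$ on $S^1$. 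This is more uniform and arguably cleaner: all holonomy cases are governed by one equation, and the r\^ole of Proposition~\ref{prop:no geodesic} becomes transparent as the non-constancy of $\theta$, i.e.\ the linear independence of $\cos\theta,\sin\theta$. The paper's approach, on the other hand, keeps the easy cases genuinely trivial and localises the hard work to a short arc.

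One point deserves more care: your final assertion that ``the remaining freedom in $\mu$ (and in $W(0)$) suffices to arrange $\int_0^L\Real W=0$'' is true but not quite for the stated reason. When $e^{i\Theta(L)}=1$ and the translation part $c=\int_0^L e^{i\theta}\,ds$ is nonzero, adjusting $W(0)$ by $a\in\C$ shifts $\int\Real W$ by $\Real\!\bigl(a\,e^{i\theta(0)}\overline c\bigr)$, which indeed ranges over all of $\R$. But when $c=0$ (trivial holonomy) this shift vanishes and $W(0)$ does not help; likewise in the case $e^{i\Theta(L)}=-1$ there is no freedom in $W(0)$ at all. In both of these situations the choice $\mu\equiv 0$ already works: the unique periodic solution is $W=-\overline{\zeta}\,e^{-i\theta}$ for the development $\zeta$ centred so that $\zeta(L)=\pm\zeta(0)$, and then $\Real W=\tfrac{d}{ds}\tfrac{y^2-x^2}{2}$ integrates to zero. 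This is precisely the pullback of $(-x^2+y^2)/2$, i.e.\ the paper's Lemma~\ref{lm:trivial hol}, recovered inside your framework. So your argument goes through once this is made explicit.
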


\subsection{Set-up of the proof}

We start by an easy lemma showing that, for points of $\ZZ$, the Hessian for the first fundamental form is essentially the Euclidean Hessian in a flat coordinate.

\begin{lemma} \label{lemma:euclidean hessian}
Let $\Sigma$ be an embedded minimal surface in a hyperbolic three-manifold $M$ such that $\|\II\|^2\leq 2$. Let $p\in\ZZ$, let $z:U\to\C$ be a flat coordinate around $p$, and let $f:U\to\R$ be a smooth function. Then, in the basis $(e_+(p),e_-(p))$, 
$$(\nabla^\Sigma df)_p=D^2(f\circ z^{-1})_{z_0}=\begin{pmatrix} (f\circ z^{-1})_{xx}(z_0) & (f\circ z^{-1})_{xy}(z_0) \\ (f\circ z^{-1})_{xy}(z_0) & (f\circ z^{-1})_{yy}(z_0)\end{pmatrix}$$ 
where $z_0=z(p)$ and $z=x+iy$.
\end{lemma}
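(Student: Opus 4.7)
My plan is to reduce the computation of the $I$-Hessian at $p$ to the ordinary Euclidean Hessian by showing that the Christoffel symbols of $I$ in the flat coordinate $z$ vanish at $p$, and then to identify the frame $(e_+(p), e_-(p))$ with $(\partial_x, \partial_y)$ at that point.

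First I would observe that, by Lemma~\ref{lemma:coshgordon}, the function $u$ satisfies $u \geq 0$ throughout $\Sigma$ and $u(p) = 0$ for $p \in \ZZ$, so $p$ is a minimum of $u$ and in particular $du(p) = 0$. Then I would write down the Christoffel symbols for the conformal metric $I = e^{2u}(dx^2 + dy^2)$ in the coordinate $z$; a direct computation gives expressions such as $\Gamma^x_{xx} = u_x$, $\Gamma^x_{xy} = u_y$, $\Gamma^x_{yy} = -u_x$ and the analogous ones for the $y$-component, each being a first-order partial derivative of $u$. Since $du(p) = 0$, all Christoffel symbols vanish at $p$, so $\nabla^\Sigma_{\partial_x}\partial_x$, $\nabla^\Sigma_{\partial_x}\partial_y$ and $\nabla^\Sigma_{\partial_y}\partial_y$ all vanish at $p$. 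It then follows from the definition $(\nabla^\Sigma df)(X,Y) = X(Y(f)) - (\nabla^\Sigma_X Y)(f)$ that the matrix of $(\nabla^\Sigma df)_p$ in the basis $(\partial_x, \partial_y)$ of $T_p\Sigma$ equals the Euclidean Hessian $D^2(f \circ z^{-1})_{z_0}$.

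Next I would identify the bases. By the formula $B = e^{-2u}(dx \otimes \partial_x - dy \otimes \partial_y)$ from Lemma~\ref{lemma:coshgordon} and the fact that $u(p) = 0$, we have $B_p = dx \otimes \partial_x - dy \otimes \partial_y$, so $\partial_x$ and $\partial_y$ are eigenvectors of $B_p$ with eigenvalues $+1$ and $-1$ respectively. Moreover, $I_p = dx^2 + dy^2$ since $u(p)=0$, so $\partial_x$ and $\partial_y$ are already $I_p$-unit vectors. Therefore $e_\pm(p) = \pm \partial_x$ or $\pm \partial_y$; in either case, the quadratic form $(\nabla^\Sigma df)_p$ has the same matrix in the frame $(e_+(p), e_-(p))$ and in $(\partial_x, \partial_y)$ (the sign ambiguity is absorbed in the symmetric bilinear form). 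The statement of the lemma then follows directly.

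There is no real obstacle here; the argument is essentially a verification. The only mildly delicate point is remembering that the vanishing of $du(p)$ — and not the vanishing of $u(p)$ alone — is what forces all Christoffel symbols to vanish, and this is exactly what the condition $p \in \ZZ$ (i.e.\ $p$ is a minimum of $u$) provides.
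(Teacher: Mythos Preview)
Your proof is correct and follows essentially the same approach as the paper: the key observations are that $p\in\ZZ$ is a minimum of $u$ so the Christoffel symbols of the conformal metric vanish at $p$, and that $(\partial_x,\partial_y)$ are (up to sign) the unit eigenvectors $(e_+,e_-)$ at $p$. Your write-up is in fact slightly more explicit than the paper's, which refers back to an earlier lemma for the vanishing of the Christoffel symbols.
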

\begin{proof}
By definition of the flat coordinate, the unit eigenvalues of $B$ at $p$ correspond, up to a sign that does not affect the conclusion, to the vectors $\partial_x$ and $\partial_y$ in $\R^2$. Now, 
as already observed in the proof of Lemma \ref{lemma first or euclidean}, if $p\in\ZZ$ then the Christoffel symbols of $I$ vanish at $p$ in the flat coordinate $z$, by Lemma \ref{lemma:coshgordon}. So the Riemannian Hessian (as a symmetric $(0,2)$-tensor) computed with respect to $I$ is simply given by the second derivatives of $f$ at $p$, that is, it coincides with the Hessian for the Euclidean metric. 
\end{proof}

In order to construct the function $f$, we will work around each connected component of $\ZZ$, and then glue with bump functions. Let us introduce some notation. Using Proposition \ref{prop:points and curves}, we denote
$$\ZZ=\bigsqcup_{i=1}^N \ZZ_{i}$$
where each $\ZZ_i$ is either a point or a simple closed curve. Also, let $r>0$ be such that, denoting $N_r(X)$ the neighbourhood of points at distance less than $r$ from $X$ (say, for the first fundamental form), the open sets $V(\ZZ_i):=N_r(\ZZ_i)$ are pairwise disjoint, are tubular neighbourhoods of $\ZZ_i$, and do not contain any zero of the holomorphic quadratic differential $\mathfrak q$ whose real part is $\II$. 

We also introduce a bump function that we will use repeatedly.

\begin{definition}\label{defi:bump}
  Let us denote by $\phi:\R_{\geq 0} \to [0,1]$ a smooth, non-increasing function such that $\phi(t)=1$ for $t\in [0,r/2]$, and $\phi(t)=0$ for $t\in [r/2,\infty)$.
\end{definition}

Now, we will divide the construction into several cases. 

\subsection{The easiest cases}\label{subsec:easy case}
Let us start by the simplest case, namely when $\ZZ_i$ is a point.

\begin{lemma} \label{lm:point}
  Suppose that $\ZZ_i=\{p_0\}$ is  an isolated point in $\ZZ$. Then there exists a smooth function $f:\Sigma\to \R$, supported in $V_i$, satisfying \eqref{eq:cond signs} at $p_0$.
\end{lemma}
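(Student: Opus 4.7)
The plan is to construct $f$ directly as a quadratic function in a flat coordinate around $p_0$, cut off by a radial bump so that the support lies inside $V_i$. Since $\ZZ_i=\{p_0\}$ is a single point, we only need to satisfy a single pointwise condition on the Hessian at $p_0$, and the easiest way to do this is to make $f$ literally equal to a prescribed quadratic polynomial in a neighbourhood of $p_0$, where the bump is constantly $1$.

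Concretely, I would first pick a flat coordinate $z=x+iy:U\to\C$ around $p_0$, with $z_0=z(p_0)$. By the choice of $V_i$ (a tubular neighbourhood of radius $r$ containing no zero of $\mathfrak q$), we may assume that the flat coordinate is defined on $V_i$ and that the first-fundamental-form ball of radius $r/2$ around $p_0$ is contained in $U$. Recall from Lemma \ref{lemma:coshgordon} that, in the flat coordinate, $B=e^{-2u}(dx\otimes\partial_x-dy\otimes\partial_y)$, so that $\partial_x$ and $\partial_y$ are eigenvectors of $B$ at $p_0$ for the eigenvalues $+1$ and $-1$ respectively. Hence, up to a sign, we may take $e_+(p_0)=\partial_x$ and $e_-(p_0)=\partial_y$.

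Next, I would define the local quadratic
\[
\tilde f(x,y)=-\tfrac{1}{2}(x-x_0)^2+\tfrac{1}{2}(y-y_0)^2,
\]
whose Euclidean Hessian at $z_0$ is $\mathrm{diag}(-1,1)$ in the basis $(\partial_x,\partial_y)$. Then I would set
\[
f(p)=
\begin{cases}
\phi\!\left(d_I(p,p_0)\right)\,\tilde f(z(p)) & \text{if } p\in V_i,\\
0 & \text{otherwise},
\end{cases}
\]
where $d_I$ denotes distance for the first fundamental form and $\phi$ is the bump function from Definition \ref{defi:bump}. Since $\phi$ vanishes for $d_I\geq r/2$, the function $f$ extends smoothly by $0$ to all of $\Sigma$ and is supported in $V_i$. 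Moreover, $\phi\equiv 1$ on a neighbourhood of $p_0$, so $f$ coincides with $\tilde f\circ z$ near $p_0$, and in particular $(\nabla^\Sigma df)_{p_0}=(\nabla^\Sigma d\tilde f)_{p_0}$. Applying Lemma \ref{lemma:euclidean hessian}, this Riemannian Hessian is represented, in the basis $(e_+(p_0),e_-(p_0))=(\partial_x,\partial_y)$, by $D^2\tilde f(z_0)=\mathrm{diag}(-1,1)$, so condition \eqref{eq:cond signs} holds at $p_0$.

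There is no real obstacle in this case, which is why it is the easiest; the delicate issues (need to use Proposition \ref{prop:no geodesic}, smoothness across the whole curve, compatibility of signs along arcs) will only appear when $\ZZ_i$ is a simple closed curve. The only minor point worth checking is the identification of the basis: the sign ambiguity in the choice of $e_\pm$ does not matter because $\nabla^\Sigma df$ is a symmetric bilinear form evaluated diagonally, and the possible rotation involved in choosing the flat coordinate (as in the proof of Proposition \ref{prop:points and curves}) can be absorbed by possibly swapping the roles of $x$ and $y$ in $\tilde f$.
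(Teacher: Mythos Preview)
Your proof is correct and follows essentially the same approach as the paper: define $f$ as the quadratic $(-x^2+y^2)/2$ in a flat coordinate, multiply by the radial bump $\phi(d(\cdot,p_0))$, and conclude via Lemma~\ref{lemma:euclidean hessian}. The only superfluous remark is the last one about a ``possible rotation'': in a genuine flat coordinate one always has $\II=dx^2-dy^2$, so $e_+=\pm\partial_x$, $e_-=\pm\partial_y$ automatically and no swap is ever needed.
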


\begin{proof}
Let $z=x+iy:U\to\C$ be a flat coordinate around $p_0$, let $F:\C\to\R$ be the function $F(x,y)=(-x^2+y^2)/2$, and let $f:\Sigma\to\R$ be defined as $f(p)=\phi(d(p,p_0))F(z(p))$. Then $f$ is supported in the $r$-neighbourhood of $p_0$, and it satisfies \eqref{eq:cond signs} by Lemma \ref{lemma:euclidean hessian}.
\end{proof}

Let us now move on to the situation where $\ZZ_i$ is a simple closed curve. In this situation, let $\widetilde V_i$ denote the universal cover of the tubular neighbourhood $V_i$, on which the fundamental group $\pi_1(V_i)\cong \Z$ acts by deck transformations. We fix a developing map $\mathrm{dev}:\widetilde V_i\to\C$ for the half-translation structure of $\Sigma\setminus \{\mathfrak q=0\}$, namely a local diffeomorphism from $\widetilde V_i$ to $\C$ such that $\mathrm{dev}^*(dz^2)=\pi^*\mathfrak q|_{V_i}$, where $\pi:\widetilde V_i\to V_i$ is the universal covering map. Such a developing map is unique up to post-composition with a transformation of the form $z\mapsto \pm z+c$, and it is equivariant with respect to the holonomy morphism $\rho$ from $\Z$ to the group of half-translations. Namely, denoting $\eta$ the generator of $\pi_1(V_i)\cong \Z$, $\mathrm{dev}\circ \eta^n=\rho(\eta^n)\circ \mathrm{dev}=\rho(\eta)^n\circ \mathrm{dev}$. 

In this setting, our goal is to construct a $\pi_1(V_i)$-invariant function $\widetilde f$ on $\widetilde V_i$, such that, locally, $\widetilde f|_U=F\circ\mathrm{dev}|_U$ for $U$ an open set on which $\mathrm{dev}$ is a diffeomorphism onto its image, and $F$ satisfies $F_{xx}=-1$ and $F_{yy}=1$. This is easily achieved if the holonomy is trivial, or consists of an order two rotation, as follows.

\begin{lemma} \label{lm:trivial hol}
  Suppose that $\ZZ_i$ is a simple closed curve such that the holonomy $\rho(\eta)$ is either trivial, or a transformation $z\mapsto -z+c$. Then there exists a smooth function $f:\Sigma\to \R$, supported in $V_i$, satisfying \eqref{eq:cond signs} at every point of $\ZZ_i$.
\end{lemma}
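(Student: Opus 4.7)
The plan is to construct $f$ as the pullback through a suitably centered quadratic polynomial on $\C$ under the developing map, and then to cut off by a bump function supported in $V_i$. The key observation is that the polynomial $F_0(w_1,w_2) = (-w_1^2 + w_2^2)/2$ has constant Euclidean Hessian $\mathrm{diag}(-1,1)$ and is centrally symmetric, so its appropriate translate will be invariant under any holonomy of order at most $2$.

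Concretely, I would define $F:\C\to\R$ as follows. When $\rho(\eta)$ is trivial, set $F(z) = \tfrac{1}{2}(-x^2 + y^2)$ for $z = x+iy$. When $\rho(\eta)(z) = -z + c$, set
$F(z) = \tfrac{1}{2}\bigl(-(\Real(z - c/2))^2 + (\Imagin(z - c/2))^2\bigr)$,
which is the translate of $F_0$ by the fixed point $c/2$ of the half-turn $z\mapsto -z + c$. Since $F_0(-w_1,-w_2)=F_0(w_1,w_2)$, a direct check gives $F(-z+c)=F(z)$. In both cases the Euclidean Hessian of $F$ is $\mathrm{diag}(-1,1)$ at every point. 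Because $F\circ\rho(\eta) = F$, the map $F\circ\mathrm{dev}:\widetilde V_i\to\R$ is invariant under the $\pi_1(V_i)$-action by deck transformations and therefore descends to a smooth function $F_{V_i}:V_i\to\R$.

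Next, I would globalise using the bump function of Definition \ref{defi:bump}: set
$f(p) = \phi(d(p,\ZZ_i))\,F_{V_i}(p)$ for $p \in V_i$ and $f(p)=0$ otherwise, where $d$ is the distance for $I$. Since $\phi\circ d$ is supported in $\overline{N_{r/2}(\ZZ_i)}\subset V_i$, the function $f$ extends smoothly to $\Sigma$ by zero. To verify \eqref{eq:cond signs} at an arbitrary $p\in\ZZ_i$: because $\phi\equiv 1$ on $[0,r/2]$, $f$ agrees with $F_{V_i}$ on a neighbourhood of $\ZZ_i$, and choosing any flat coordinate $z:U\to\C$ around $p$ (a local branch of $\mathrm{dev}$) we have $f\circ z^{-1} = F$, whose Euclidean Hessian at $z(p)$ is $\mathrm{diag}(-1,1)$. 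By Lemma \ref{lemma:euclidean hessian}, this is exactly the Riemannian Hessian $(\nabla^\Sigma df)_p$ expressed in the basis $(e_+(p),e_-(p))$, which gives $\nabla^\Sigma df(e_+,e_+) = -1$ and $\nabla^\Sigma df(e_-,e_-) = 1$.

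The only genuinely substantive step is the holonomy invariance of $F$, and it reduces to the elementary central symmetry of $F_0$. The hypothesis that $\rho(\eta)$ is trivial or of order two is essential here: when $\rho(\eta)$ has a nontrivial translational part (the cases presumably treated in the subsequent lemmas), no single polynomial on $\C$ can be invariant, and a different construction — presumably using Proposition \ref{prop:no geodesic} to rule out the obstructing geometry, and producing a function defined along $\ZZ_i$ and then transported off the curve — will be required.
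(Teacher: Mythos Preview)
Your proof is correct and follows essentially the same approach as the paper's. The only cosmetic difference is that the paper normalises the developing map by a translation so that the half-turn becomes $z\mapsto -z$, whereas you instead translate the quadratic $F_0$ to be centred at the fixed point $c/2$; these are equivalent manoeuvres, and the remainder (invariance by central symmetry, descent to $V_i$, Lemma~\ref{lemma:euclidean hessian}, and the bump cut-off) is identical.
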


\begin{proof}
If $\rho(\eta)(z)=-z+c$, we can post-compose $\mathrm{dev}$ with a translation, which has the effect of conjugating $\rho$, and we can thus assume that $\rho(\eta)(z)=-z$. Now, consider the function $F(x,y)=(-x^2+y^2)/2$, and define $\widetilde f:=F\circ \mathrm{dev}$. We claim that $\widetilde f$ descends to a function $f:V_i\to\R$. If $\rho(\eta)(z)=z$, then the claim trivially holds. If $\rho(\eta)(z)=-z$, we have $\widetilde f(\eta(p))=F\circ \mathrm{dev}(\eta(p))=F(-\mathrm{dev}(p))=\widetilde f(p)$ since $F(-z)=F(z)$. In both cases, $f$ satisfies \eqref{eq:cond signs} at every point of $\ZZ_i$ by Lemma \ref{lemma:euclidean hessian}. Then we conclude by multiplying $f$ by $\phi(d(\cdot,\ZZ_i))$, as for Lemma \ref{lm:point}. 
\end{proof}

\subsection{The interesting case}

We now consider the interesting case, namely when $\rho(\eta)$ is a translation $z\mapsto z+c$. This will require some additional technicalities. 
The statement we aim to prove is the following. 

\begin{lemma} \label{lm:curve}
  Suppose that $\ZZ_i$ is a simple closed curve such that the holonomy $\rho(\eta)$ is  a translation. Then there exists a smooth function $f:\Sigma\to \R$, supported in $V_i$, satisfying \eqref{eq:cond signs} at every point of $\ZZ_i$.
\end{lemma}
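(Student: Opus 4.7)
I will work in the universal cover $\widetilde V_i$ via the developing map $\mathrm{dev}$, which identifies $\widetilde V_i$ with an open set $\Omega\subset\C$ on which $\pi_1(V_i)=\Z$ acts by the translation $z\mapsto z+c$ with $c\neq 0$ (the case $c=0$ being covered by Lemma \ref{lm:trivial hol}). Let $\widetilde\gamma\subset\Omega$ be the image under $\mathrm{dev}$ of the connected preimage $\pi^{-1}(\ZZ_i)$, a smooth curve invariant under $z\mapsto z+c$. My goal is to construct a smooth $g:\Omega\to\R$, supported in a tubular neighbourhood of $\widetilde\gamma$, satisfying $g(z+c)=g(z)$ and with Euclidean Hessian $g_{xx}=-1$, $g_{yy}=1$ at every point of $\widetilde\gamma$; by Lemma \ref{lemma:euclidean hessian}, the induced function on $V_i$, extended by zero to $\Sigma$, will then satisfy \eqref{eq:cond signs}.

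The key observation is that $g_{xy}|_{\widetilde\gamma}$ is \emph{not} prescribed by \eqref{eq:cond signs}, and this freedom is essential. Parametrising $\widetilde\gamma$ by arc length with $\widetilde\gamma(s+T)=\widetilde\gamma(s)+c$ and writing $\alpha(s)$ for its tangent angle, differentiating $A(s):=g_x(\widetilde\gamma(s))$ gives $A'=g_{xx}\cos\alpha+g_{xy}\sin\alpha$. Imposing $g_{xy}|_{\widetilde\gamma}=0$ would force $\int_0^T A'\,ds=-\Real(c)\neq 0$, contradicting the $T$-periodicity of $A$ required by $c$-periodicity of $g$. Hence $g_{xy}|_{\widetilde\gamma}$ must be chosen non-trivially.

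By Proposition \ref{prop:no geodesic} applied to $\ZZ_i\subset\CC$, the curve $\widetilde\gamma$ is not a Euclidean straight line, so $\alpha$ is non-constant and $\cos\alpha,\sin\alpha$ are linearly independent as $T$-periodic functions. I then choose a smooth $T$-periodic function $K(s)$ (the target value of $g_{xy}|_{\widetilde\gamma(s)}$) satisfying
\[ \int_0^T K\sin\alpha\,ds=\Real(c)\qquad\text{and}\qquad \int_0^T K\cos\alpha\,ds=-\Imagin(c), \]
which is possible by the linear independence. Then I define $T$-periodic functions $A,B$ (depending on integration constants $A_0,B_0$) by $A'=-\cos\alpha+K\sin\alpha$ and $B'=K\cos\alpha+\sin\alpha$; the integral conditions on $K$ are precisely what guarantees the resulting primitives are $T$-periodic.

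In Fermi coordinates $(s,r)\mapsto\widetilde\gamma(s)+r\nu(s)$ on a tubular neighbourhood of $\widetilde\gamma$, set
\[ g(s,r)=f_0(s)+P(s)\,r+\tfrac12 Q(s)\,r^2, \]
where $P=-\sin\alpha\,A+\cos\alpha\,B$, $Q=\cos(2\alpha)-K\sin(2\alpha)$, and $f_0$ is a primitive of $\cos\alpha\,A+\sin\alpha\,B$. Periodicity of $f_0$ reduces to the single linear equation $A_0\Real(c)+B_0\Imagin(c)=(\text{explicit constant})$, solvable in $(A_0,B_0)$ since $c\neq 0$. A direct computation, using $\nabla_{\partial_s}\partial_s|_{r=0}=\kappa(s)\partial_r$ to convert the Hessian between $(s,r)$- and $(x,y)$-coordinates, then shows that $g$ has Euclidean Hessian $\bigl(\begin{smallmatrix}-1 & K\\ K & 1\end{smallmatrix}\bigr)$ at every point of $\widetilde\gamma$. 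Multiplying by a cutoff $\phi(r)$ equal to $1$ near $r=0$ and supported in $|r|<\delta$ preserves the $2$-jet at $r=0$, and descending to $V_i$ and extending by zero finally yields the desired $f$. The main delicacy throughout is the tight coupling between $c$-periodicity and the Hessian prescription, and Proposition \ref{prop:no geodesic} is precisely the ingredient that makes this coupling solvable.
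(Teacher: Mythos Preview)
Your argument is correct and takes a genuinely different route from the paper. The paper proceeds via an auxiliary statement (Proposition~\ref{prop:technical statement}): starting from the global candidate $\Psi_\alpha(x,y)=(-x^2+y^2)/2+\alpha(x,y)$, it prescribes a Hessian field $\mathcal H$ on $\R^2$ that equals $\mathrm{diag}(-1,1)$ outside a narrow vertical strip and integrates it to a function $F$ that interpolates between $\Psi_\alpha$ and $\Psi_\alpha(\,\cdot-x_0,\,\cdot-y_0)$; the interpolation is concentrated on a short arc where $\zeta$ is a graph over the $x$-axis, and Proposition~\ref{prop:no geodesic} enters through the surjectivity of the map $\xi\mapsto(\int\xi h',\int\xi)$. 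You instead work along the whole curve in Fermi coordinates and build only the $2$-jet in the normal direction, exploiting the free off-diagonal entry $K(s)$ of the Hessian to absorb the translation $c$; Proposition~\ref{prop:no geodesic} enters in the same spirit, via the linear independence of $\cos\alpha,\sin\alpha$. Your approach avoids the extra technical proposition and the need to pass to a graph chart; the paper's approach gives a function that is explicitly $(-x^2+y^2)/2$ plus affine outside a small set, which makes the ``correction'' viewpoint transparent.

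One point to tighten: you assert that $\mathrm{dev}$ \emph{identifies} $\widetilde V_i$ with an open set $\Omega\subset\C$, but the developing map of a half-translation structure is only a local diffeomorphism, and the $c$-periodic curve $\mathrm{dev}(\pi^{-1}(\ZZ_i))$ in $\C$ may well self-intersect. This is harmless for your construction, since the Fermi coordinates and the Hessian computation are local and can be carried out intrinsically on $\widetilde V_i$ using the pullback flat metric $\mathrm{dev}^*|dz|^2$ (for which $\pi^{-1}(\ZZ_i)$ is embedded, so normal tubular coordinates exist for small $\delta$); but you should phrase it that way rather than assuming an embedding into $\C$.
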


Let us explain the idea of the proof. One cannot simply proceed as in the proofs of Lemma \ref{lm:trivial hol} and Lemma \ref{lm:curve}, the reason being that the function $F(x,y)=(-x^2+y^2)/2$ (or more generally $F(x,y)=(-x^2+y^2)/2+\alpha(x,y)$, for $\alpha$ an affine function) is not invariant by translation. Indeed, $F(x+x_0,y+y_0)-F(x,y)$ is an affine function that depends on $x_0$ and $y_0$. The idea is then to start from a function of the form $F(x,y)=(-x^2+y^2)/2+\alpha(x,y)$ and modify it, still maintaining the condition \eqref{eq:cond signs} and concentrating the non-trivial modification   in a small open set, so as to achieve the right invariance by translation.

We will prove the following technical statement.

\begin{proposition}\label{prop:technical statement}
Let $\zeta(x)=(x,h(x))$ be a smooth curve in $\R^2$ whose image is not contained in any line, for $x\in (0,\delta)$. Fix $(x_0,y_0)$ in $\R^2$. Then there exists a smooth function $F:\R^2\to\R$ and an affine function $\alpha:\R^2\to\R$ such that, denoting $\Psi_\alpha(x,y)=(-x^2+y^2)/2+\alpha(x,y)$,
\begin{enumerate}
\item On $\{x\leq 0\}$, $F(x,y)=\Psi_\alpha(x,y)$;
\item On $\{x\geq \delta\}$, $F(x,y)=\Psi_\alpha(x-x_0,y-y_0)$;
\item The second derivatives of $F$ satisfy $F_{xx}=-1$ and $F_{yy}=1$ at every point of $\zeta$.
\end{enumerate}
\end{proposition}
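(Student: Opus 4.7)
The plan is to write $F = \Psi_\alpha + G$ with $G \in C^\infty(\R^2)$ to be determined, so that conditions (1)--(3) translate to conditions on $G$ alone. A direct computation gives
\[
L(x,y) := \Psi_\alpha(x - x_0, y - y_0) - \Psi_\alpha(x,y) = x_0 x - y_0 y + C_0,
\]
where the constant $C_0$ depends on $(x_0, y_0)$ and the linear coefficients of $\alpha$, and can be prescribed arbitrarily by adjusting $\alpha$ (if $(x_0, y_0) = (0, 0)$ the proposition is trivial with $F = \Psi_\alpha$). Since $\Psi_\alpha$ has constant Hessian $\mathrm{diag}(-1, 1)$, the conditions on $G$ become: $G = 0$ on $\{x \leq 0\}$, $G = L$ on $\{x \geq \delta\}$, and $G_{xx} = G_{yy} = 0$ at every point of $\zeta$.

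Next I look for $G$ in the form $G(x, y) = A(x) + B(x)\, y$ with $A, B \in C^\infty(\R)$, which automatically forces $G_{yy} \equiv 0$ on $\R^2$. The boundary conditions then read $A(x) = B(x) = 0$ for $x \leq 0$, $B(x) = -y_0$ for $x \geq \delta$, and $A(x) = x_0 x + C_0$ for $x \geq \delta$, while the Hessian condition on $\zeta$ becomes the ODE $A''(x) + B''(x)\, h(x) = 0$ for $x \in (0, \delta)$. I would then set $B(x) := -y_0\, \phi(x) + \chi(x)$, with $\phi$ a fixed smooth cutoff ($\phi = 0$ on $\{x \leq 0\}$, $\phi = 1$ on $\{x \geq \delta\}$) and $\chi \in C_c^\infty((0, \delta))$ to be chosen, and define $A$ by the ODE with initial conditions $A(0) = A'(0) = 0$. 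Since $B''$ is supported in $(0, \delta)$, $A$ is smooth, vanishes on $\{x \leq 0\}$, and is affine on $\{x \geq \delta\}$. The requirement $A(\delta) = x_0 \delta + C_0$ is then absorbed into the choice of $C_0$ (hence of $\alpha$), so only the single condition $A'(\delta) = x_0$ remains.

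Integration by parts, using $B(0) = B'(0) = 0$, $B(\delta) = -y_0$ and $B'(\delta) = 0$, shows that $A'(\delta) = x_0$ is equivalent to the linear equation
\[
-y_0\, h'(\delta) + y_0 \int_0^\delta \phi(t)\, h''(t)\, dt - \int_0^\delta \chi(t)\, h''(t)\, dt = x_0.
\]
The main obstacle is to show this equation is always solvable, and this is where the hypothesis that the image of $\zeta$ is not contained in any line enters decisively: it means $h$ is not affine on $(0, \delta)$, so $h''$ is not identically zero there, and smooth bumps supported in $(0, \delta)$ pair with $h''$ to produce any prescribed real value. When $y_0 \neq 0$ I take $\chi = 0$ and perturb $\phi$ by a small bump supported in $(0, \delta)$ to hit the target; when $y_0 = 0$ the first two terms vanish and I instead choose $\chi$ so that $\int_0^\delta \chi\, h''\, dt = -x_0$. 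The latter is the delicate case: the naive choice $\chi = 0$ would force $B \equiv 0$, whence $A \equiv 0$ on $(0, \delta)$ by the ODE and initial conditions, contradicting $A'(\delta) = x_0 \neq 0$. The non-linearity of $\zeta$ is precisely what rescues the construction, and the introduction of the auxiliary bump $\chi$ unifies the two cases into a single argument.
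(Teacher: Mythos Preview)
Your construction is correct and, beneath the different packaging, is the same as the paper's. The paper prescribes the Hessian field
\[
\mathcal H(x,y)=\begin{pmatrix} -1+(y-h(x))\xi'(x) & \xi(x)\\ \xi(x) & 1\end{pmatrix},\qquad \xi\in C_c^\infty((0,\delta)),
\]
checks it integrates (since $G_{yy}\equiv 1$ this forces $G$ affine in $y$, exactly your ansatz $A(x)+B(x)y$ after subtracting $\Psi_0$), and then needs the map $\xi\mapsto\bigl(\int\xi h',\,\int\xi\bigr)$ to be onto $\R^2$, which follows from $h'\not\equiv\mathrm{const}$. With the identification $\xi=B'$, your condition $B(\delta)=-y_0$ is the paper's $\int\xi=-y_0$, and a single integration by parts in $A'(\delta)=-\int B''h$ gives $\int B'h'=x_0$, the paper's other condition. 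Your second integration by parts (passing from $h'$ to $h''$) and the case split on $y_0$ are therefore unnecessary, though harmless. One genuine caveat: the statement only assumes $h$ smooth on the \emph{open} interval $(0,\delta)$, so the boundary term $h'(\delta)$ in your displayed equation need not exist; this is easily repaired by choosing $\phi$ constant near $0$ and near $\delta$, so that $B''$ (hence $B''h$) is compactly supported in $(0,\delta)$ and all integrations by parts take place over a compact subinterval.
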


Before proving Proposition \ref{prop:technical statement}, we explain how it will imply Lemma \ref{lm:curve}. 

\begin{proof}[Proof of Lemma \ref{lm:curve}]
We use the same notation as in Section \ref{subsec:easy case}. Assume the holonomy of $\ZZ_i$ is given, in real coordinates, by
\begin{equation}\label{eq:hol translation}
\rho(\eta)(x,y)=(x+x_0,y+y_0)~.
\end{equation}
Take coordinates $(t,s)\in \R/L\Z\times(-\bar s,\bar s)$ on $V_i$ such that the simple closed curve $\ZZ_i$ in $V_i$ is parameterized by $t\mapsto (t,0)$. Then $\widetilde V_i$ has induced coordinates $(t,s)\in\R\times (-\bar s,\bar s)$ (that we still denote with the same letters by a small abuse of notation) and the generator $\eta$ of $\pi_1(\widetilde V_i)\cong\Z$ acts by deck transformation as $\eta(t,s)=(t+L,s)$. 

Now, choose a developing map $\mathrm{dev}:\widetilde V_i\to\C$ and let $\zeta:\R\to\C$ be the curve $\zeta(t)=\mathrm{dev}(t,0)$ in the above coordinates. Take $a,b,\epsilon$ with $0<\epsilon<a<b<L-\epsilon$ and such that $\mathrm{dev}$ is a diffeomorphism onto its image when restricted to $[a,b]\times (-\bar s,\bar s)$, up to taking a smaller $\bar s$ if necessary. Since by Proposition \ref{prop:no geodesic} $\zeta$ is nowhere contained in a line, its tangent vector is generically not vertical, so up to restricting the interval $[a,b]$ we can assume that $\zeta|_{[a,b]}$ is a graph over the $x$-axis. Finally, post-composing $\mathrm{dev}$ with a translation we also assume  that $\zeta(a)=0$. 

We can apply Proposition \ref{prop:technical statement} and find a function $F:\R^2\to\R$ such that $F_{xx}=-1$ and $F_{yy}=1$ on every point of $\zeta$, with $F(x,y)=\Psi_\alpha(x,y)=(-x^2+y^2)/2+\alpha(x,y)$ on $\{x\leq 0\}$ for some affine function $\alpha$, and $F(x,y)=\Psi_\alpha(x-x_0,y-y_0)$ on $\{x\geq \delta\}$, where $\delta$ is such that $\zeta(b)\in \{x=\delta\}$. We define a function $\widetilde f$ as follows.
$$\widetilde f(t,s)=\begin{cases}
 \Psi_\alpha(\mathrm{dev}(t,s)) & \textrm{if }t\in (-\epsilon,a] \\
F(\mathrm{dev}(t,s)) & \textrm{if }t\in[a,b] \\
\Psi_\alpha(\mathrm{dev}(t,s)-(x_0,y_0))  & \textrm{if }t\in [b,L+\epsilon)
\end{cases}$$
This function is defined on the points of $\widetilde V_i$ such that $t\in(-\epsilon,L+\epsilon)$. It is well-defined and smooth by the properties of $F$ from Proposition \ref{prop:technical statement}. 

Moreover, it satisfies $\widetilde f\circ \eta=\widetilde f$ because for $t\in (-\epsilon,\epsilon)$, $\eta(t,s)=(t+L,s)$, hence
\begin{align*}
\widetilde f\circ\eta(t,s)&=\Psi_\alpha(\mathrm{dev}(\eta(t,s))-(x_0,y_0)) \\
&=\Psi_\alpha(\rho(\eta)\mathrm{dev}(t,s)-(x_0,y_0)) \\
&=\Psi_\alpha(\mathrm{dev}(t,s)) \\
&=\widetilde f(t,s)~,
\end{align*}
where in the second last line we have used \eqref{eq:hol translation}. 

Therefore $\widetilde f$ induces a smooth function on $\{(t,s)\in V_i\,|\,|s|<\bar s\}$ that, by Lemma \ref{lemma:euclidean hessian}, satisfies the property \eqref{eq:cond signs} on $\ZZ_i$.
Multiplying by $\phi(d(\cdot,\ZZ_i))$, for $\phi$ a bump function as in Definition \ref{defi:bump} and $r$ such that $N_r(\ZZ_i)\subset \{|s|<\bar s\}$, we obtain the desired function $f$.
\end{proof}

We conclude the section by proving Proposition \ref{prop:technical statement}.

\begin{proof}[Proof of Proposition \ref{prop:technical statement}]
We will construct a function $G:\R^2\to\R$ such that:
\begin{enumerate}
\item[\emph{(1')}] On $\{x\leq 0\}$, $G(x,y)=\Psi_0(x,y)=(-x^2+y^2)/2$;
\item[\emph{(2')}] On $\{x\geq \delta\}$, $G(x,y)=\Psi_0(x-x_0,y-y_0)+C$ for some $C\in\R$;
\item[\emph{(3')}] The second derivatives of $G$ satisfy $G_{xx}=-1$ and $G_{yy}=1$ on every point of $\zeta$.
\end{enumerate}

Then the desired $F$ is obtained as $F=G+\alpha$, where $\alpha$ is an affine function such that $\alpha(x+x_0,y+y_0)=\alpha(x,y)-C$. 

We shall define the function $G$ through its Hessian. That is, consider the field of 2-by-2 matrices $\mathcal H:\R^2\to\mathcal M_{2,2}(\R)$ given by:
$$\mathcal H(x,y)=\begin{pmatrix} -1+(y-h(x))\xi'(x) & \xi(x) \\ \xi(x) & 1 \end{pmatrix}$$
for a smooth function $\xi:\R\to\R$, to be determined, such that $\xi$ is supported on $(0,\delta)$. One easily checks that $\mathcal H$ is the Hessian of a function defined on $\R^2$, because it is symmetric and satisfies the Codazzi condition. That is, the 1-forms $\omega_1:=(-1+(y-h(x))\xi'(x))dx+\xi(x)dy$ and $\omega_2:=\xi(x)dx+dy$ obtained from the columns of $\mathcal H$ are closed. Hence there exist two functions $G_1$ and $G_2$ such that $dG_1=\omega_1$ and $dG_2=\omega_2$. Moreover $G_1dx+G_2dy$ is in turn closed by symmetry of $\mathcal H$, and thus equals $dG$ for some function $G:\R^2\to\R$. It follows that $\mathcal H=D^2G$.

Observe that $\mathcal H=\mathrm{diag}(-1,1)$ on $\{x\leq 0\}$, on $\{x\geq \delta\}$. Moreover, on the image of $\zeta$, the term $(y-h(x))$ vanishes, so there $\mathcal H=\mathrm{diag}(-1,1)$ again. Therefore the condition \emph{(3')} is satisfied. Up to adding an affine function, we can further assume that $G=(-x^2+y^2)/2$ on $\{x\leq 0\}$, and thus  \emph{(1')} is satisfied. It thus remains to check \emph{(2')}, that will require suitably choosing $\xi$.

Since 
$\Psi_0(x-x_0,y-y_0)=(-x^2+y^2)/2+x_0x-y_0y-x_0^2/2+y_0^2/2$, in order to achieve \emph{(2')} it suffices to guarantee that $G_1(x,y)=-x+x_0$ and $G_2(x,y)=y-y_0$ on $\{x\geq \delta\}$. Integrating along $\zeta(x)=(x,h(x))$, we only need to impose:
\begin{equation}\label{eq:integrate}
\int_0^\delta \xi(x)h'(x)dx=x_0\qquad \textrm{and}\qquad \int_0^\delta \xi(x)dx=-y_0~.
\end{equation}
To conclude the proof, we claim that there exists a function $\xi$ for which \eqref{eq:integrate} is satisfied. For this purpose, we show that the following linear map $\mathcal I:C^\infty_0((0,\delta))\to\R^2$ is surjective:
$$\xi\mapsto\left(\int_0^\delta \xi(x)h'(x)dx, \int_0^\delta \xi(x)dx\right)~.$$
Clearly the rank of $\mathcal I$ is at least one, because the integral of $\xi$ does not vanish for some function $\xi$. So, suppose by contradiction that the image of $\mathcal I$ is the span of $(\lambda,1)$ for some $\lambda\in\R$. This means that 
$$\int_0^\delta \xi(x)h'(x)dx=\lambda \int_0^\delta \xi(x)dx$$
or equivalently 
$$\int_0^\delta \xi(x)(h'(x)-\lambda)dx=0$$
for every $\xi$. By the fundamental lemma of the calculus of variations, $h'(x)\equiv \lambda$, which implies that $\zeta$ is contained in a line, contradicting the hypothesis.
\end{proof}

\section{Conclusion of the proofs}

Let us now wrap up the proof of Theorem \ref{thm:main}.

\begin{proof}[Proof of Theorem \ref{thm:main}]
Let $\Sigma$ be an embedded minimal surface in a hyperbolic three-manifold, with principal curvatures in $[-1,1]$. Recall that $\ZZ$, which has been defined in Definition \ref{defi ZZ}, consists of finitely many isolated points and simple closed curves. Combining Lemmas \ref{lm:point}, \ref{lm:trivial hol} and \ref{lm:curve}, we can find a smooth function $f:\Sigma\to\R$ satisfying the condition \eqref{eq:cond signs}, which is obtained as the sum of smooth functions supported in the pairwise disjoint $r$-neighbourhoods of each connected component of $\ZZ$.  

Now, recall that by Lemma \ref{lemma:embedded} the surface $\Sigma_{t\!f}$ obtained by normal evolution along $tf$ is embedded for $t\in(-\epsilon,\epsilon)$. By Lemma \ref{lemma:variation principal curv}, at every point of $\ZZ$ the derivative of the positive principal curvature $\lambda_{t\!f}^+$ of $\Sigma_{t\!f}$ is $-1$, while the derivative of the negative principal curvature $\lambda^-_{t\!f}$ is $+1$. Hence, by continuity, for every $p\in\ZZ$ there exists $\epsilon(p)$ and a neighbourhood $\mathcal U_p$ such that $\iota_{t\!f}(\mathcal U_p)$ has principal curvatures in $(-1,1)$ for $t\in(-\epsilon(p),\epsilon(p))$. By compactness of $\ZZ$, we can extract a finite subcover from the covering $\{\mathcal U_p\}_{p\in\ZZ}$, and therefore one can find a neighbourhood $\mathcal U$ of $\ZZ$ such that $\iota_{t\!f}(\mathcal U)$ has principal curvatures in $(-1,1)$ for $t\in(-\epsilon,\epsilon)$, up to taking a small enough $\epsilon$.

It remains to ensure that the principal curvatures of $\Sigma_{t\!f}=\iota_{t\!f}(\Sigma)$ are in  $(-1,1)$ everywhere. Using compactness, we can fix $\tau_0\in (0,1)$ such that 
$$\{p\in\Sigma\,|\,\lambda^+(p)>1-\tau_0\textrm{ and }\lambda^-(p)<-1+\tau_0\}\subset \mathcal U~.$$
Then by continuity and compactness again, the principal curvatures of $\iota_{t\!f}(\Sigma\setminus \mathcal U)$ are in $(-1+\tau_0/2,1-\tau_0/2)$ for $t\in(-\epsilon,\epsilon)$, taking a smaller  $\epsilon$ if necessary. So in conclusion, the principal curvatures of $\Sigma_{t\!f}$ are in $(-1,1)$ everywhere, for $t$ sufficiently small.
 \end{proof}

We conclude by a quick proof of the partial converse result, namely Theorem \ref{thm:converse}.

\begin{proof}[Proof of Theorem \ref{thm:converse}]
Let $\Sigma$ be a closed embedded surface in $M$ with principal curvatures in $(-\varepsilon,\varepsilon)$. Lift $\Sigma$ to an embedded surface $\widetilde\Sigma$ in $\widetilde M\cong\HH^3$. From the results of \cite[Section 5]{epstein2}, $\partial_\infty\widetilde\Sigma$ is a $K$-quasicircle for 
$$K=\frac{1+\varepsilon}{1-\varepsilon}~.$$
From the results of \cite{seppi} (see also \cite[Corollary 1.7]{huang-lowe-seppi}), there exists a universal constant $K_0>1$ such that, if $\widehat \Sigma$ is a properly embedded minimal surface in $\HH^3$ with asymptotic boundary a $K$-quasicircle with $K\leq K_0$, then $\widehat \Sigma$ has principal curvatures in $(-1,1)$. Choosing $\varepsilon$ such that $(1+\varepsilon)/(1-\varepsilon)\leq K_0$ and applying the result to $\widehat\Sigma=\widetilde\Sigma$ concludes the proof. 
 \end{proof}

\bibliographystyle{alpha}
\bibliography{bibnearly}

\end{document}